\documentclass[a4paper,12pt,reqno]{amsart}
\usepackage{amsfonts, color}
\usepackage{amsmath}
\usepackage{amssymb}
\usepackage[a4paper]{geometry}
\usepackage{mathrsfs}
\usepackage[colorlinks]{hyperref}
\usepackage{hyperref}
\renewcommand\eqref[1]{(\ref{#1})}

\makeatletter
\newcommand*{\mint}[1]{%
  \mint@l{#1}{}%
}
\newcommand*{\mint@l}[2]{%
  \@ifnextchar\limits{%
    \mint@l{#1}%
  }{%
    \@ifnextchar\nolimits{%
      \mint@l{#1}%
    }{%
      \@ifnextchar\displaylimits{%
        \mint@l{#1}%
      }{%
        \mint@s{#2}{#1}%
      }%
    }%
  }%
}
\newcommand*{\mint@s}[2]{%
  \@ifnextchar_{%
    \mint@sub{#1}{#2}%
  }{%
    \@ifnextchar^{%
      \mint@sup{#1}{#2}%
    }{%
      \mint@{#1}{#2}{}{}%
    }%
  }%
}
\def\mint@sub#1#2_#3{%
  \@ifnextchar^{%
    \mint@sub@sup{#1}{#2}{#3}%
  }{%
    \mint@{#1}{#2}{#3}{}%
  }%
}
\def\mint@sup#1#2^#3{%
  \@ifnextchar_{%
    \mint@sup@sub{#1}{#2}{#3}%
  }{%
    \mint@{#1}{#2}{}{#3}%
  }%
}
\def\mint@sub@sup#1#2#3^#4{%
  \mint@{#1}{#2}{#3}{#4}%
}
\def\mint@sup@sub#1#2#3_#4{%
  \mint@{#1}{#2}{#4}{#3}%
}
\newcommand*{\mint@}[4]{%
  \mathop{}%
  \mkern-\thinmuskip
  \mathchoice{%
    \mint@@{#1}{#2}{#3}{#4}%
        \displaystyle\textstyle\scriptstyle
  }{%
    \mint@@{#1}{#2}{#3}{#4}%
        \textstyle\scriptstyle\scriptstyle
  }{%
    \mint@@{#1}{#2}{#3}{#4}%
        \scriptstyle\scriptscriptstyle\scriptscriptstyle
  }{%
    \mint@@{#1}{#2}{#3}{#4}%
        \scriptscriptstyle\scriptscriptstyle\scriptscriptstyle
  }%
  \mkern-\thinmuskip
  \int#1%
  \ifx\\#3\\\else_{#3}\fi
  \ifx\\#4\\\else^{#4}\fi
}
\newcommand*{\mint@@}[7]{%
  \begingroup
    \sbox0{$#5\int\m@th$}%
    \sbox2{$#5\int_{}\m@th$}%
    \dimen2=\wd0 %
    \let\mint@limits=#1\relax
    \ifx\mint@limits\relax
      \sbox4{$#5\int_{\kern1sp}^{\kern1sp}\m@th$}%
      \ifdim\wd4>\wd2 %
        \let\mint@limits=\nolimits
      \else
        \let\mint@limits=\limits
      \fi
    \fi
    \ifx\mint@limits\displaylimits
      \ifx#5\displaystyle
        \let\mint@limits=\limits
      \fi
    \fi
    \ifx\mint@limits\limits
      \sbox0{$#7#3\m@th$}%
      \sbox2{$#7#4\m@th$}%
      \ifdim\wd0>\dimen2 %
        \dimen2=\wd0 %
      \fi
      \ifdim\wd2>\dimen2 %
        \dimen2=\wd2 %
      \fi
    \fi
    \rlap{%
      $#5%
        \vcenter{%
          \hbox to\dimen2{%
            \hss
            $#6{#2}\m@th$%
            \hss
          }%
        }%
      $%
    }%
  \endgroup
}
%
%
\setlength{\textwidth}{15.2cm}
\setlength{\textheight}{22.7cm}
\setlength{\topmargin}{0mm}
\setlength{\oddsidemargin}{3mm}
\setlength{\evensidemargin}{3mm}
\setlength{\footskip}{1cm}


\numberwithin{equation}{section}
\theoremstyle{plain}
\newtheorem{thm}{Theorem}[section]

\newtheorem{cor}[thm]{Corollary}

\theoremstyle{definition}
\newtheorem{defn}[thm]{Definition}
\newtheorem{rem}[thm]{Remark}


\title[Schrödinger equation for Sturm-Liouville operator with singular  ]{Schrödinger equation for Sturm-Liouville operator with singular propagation and potential}
\author[M. Ruzhansky]{Michael Ruzhansky}
\address{
  Michael Ruzhansky:
  \endgraf
  Department of Mathematics: Analysis, Logic and Discrete Mathematics
  \endgraf
  Ghent University, Belgium
  \endgraf
 and
  \endgraf
  School of Mathematical Sciences
  \endgraf
  Queen Mary University of London
  \endgraf
  United Kingdom
  \endgraf
  {\it E-mail address} {\rm michael.ruzhansky@ugent.be}
  }

\author[A. Yeskermessuly]{Alibek Yeskermessuly}
\address{
  Alibek Yeskermessuly:
    \endgraf
  Department of Mathematics: Analysis, Logic and Discrete Mathematics
  \endgraf
  Ghent University, Belgium
  \endgraf
 and 
   \endgraf
  Altynsarin Arkalyk Pedagogical Institute,  \\ 
  \endgraf
  Arkalyk, Kazakhstan
  \endgraf
  {\it E-mail address} {\rm alibek.yeskermessuly@gmail.com}}

\begin{document}

\thanks{The authors are supported by the FWO Odysseus 1 grant G.0H94.18N: Analysis and Partial Differential Equations and by the Methusalem programme of the Ghent University Special Research Fund (BOF) (Grant number 01M01021). Michael Ruzhansky is also supported by EPSRC grants EP/R003025/2 and EP/V005529/1.
\\
\indent
{\it Keywords:} Schrödinger equation; Sturm-Liouville; singular coefficient; very weak solutions.}

%
%

%
%

%
\maketitle              

\begin{abstract}
In this paper we consider an initial/boundary value problem for the Schrödinger equation with the Hamiltonian involving the fractional Sturm-Liouville operator with singular propagation and potential. To construct a solution, first considering the coefficients in a regular sense, the method of separation of variables is used, which leads the solution of the equation to the eigenvalue and eigenfunction problem of the Sturm-Liouville operator. Next, using the Fourier series expansion in eigenfunctions, a solution to the Schrödinger equation is constructed. Important estimates related to the Sobolev space are also obtained.
In addition, the equation is studied in the case where the initial data, propagation and potential are strongly singular. For this case, the concept of\, ``very weak solutions'' is used. The existence, uniqueness, negligibility and consistency of very weak solution of the Schrödinger equation are established.

\end{abstract}

\section{Introduction}

The main goal of this paper is to establish the existence of physical solutions for the Schrödinger equation, specifically when it involves the Sturm-Liouville operator with singular potentials. When tackling problems with strong singularities, a prior study by \cite{Gar-Ruz} introduced the concept of \textquotedblleft very weak solutions\textquotedblright. This approach is necessary because when the equation involves products of various terms, it can no longer be clearly defined in spaces of distributions. Consequently, we require an alternative way to determine the well-posedness of the equation.

The development of very weak solutions for various types of problems continued in several works, such as \cite{ARST1}, \cite{ARST2}, \cite{ARST3}, \cite{CRT1}, \cite{CRT2}, \cite{Lan-Ham}, \cite{Ruz-Tok}, \cite{R-Y}. In the works \cite{RYS} and \cite{RYes} the concept of very weak solutions of the wave equation for the Sturm-Liouville operator with singular potentials in bounded domains was expanded.

It is known that the Schrödinger equation can be simplified into ordinary linear equations using the \textquotedblleft separation of variables\textquotedblright \,method, see e.g. \cite{Separ}. To present our main findings, we provide some initial information about the Sturm-Liouville operator with singular potentials. Savchuk and Shkalikov's study in \cite{Sav-Shk} yielded eigenvalues and eigenfunctions for this operator. Additionally, studies in \cite{N-zSk}, \cite{Savch}, \cite{Sav-Shk2}, and \cite{SV} explored the Sturm-Liouville operator with potential-distributions. To establish the framework for very weak solutions, our focus is primarily on estimating solutions for more regular problems while also considering the impact of a regularization parameter on these solutions.

For further reasoning and obtaining our results, we need some preliminaries about the Sturm-Liouville operator with singular potentials. More specifically, we consider the Sturm-Liouville operator $\mathcal{L}$ generated on the interval (0,1) by the differential expression 
\begin{equation}\label{St-L}
    \mathcal{L}y:=-\frac{d^2}{dx^2}y+q(x)y,
\end{equation}
with the boundary conditions
\begin{equation}\label{Dirihle}
    y(0)=y(1)=0. 
\end{equation}
The potential $q$ is defined as
\begin{equation}\label{con-q}
    q(x)=\nu'(x)\geq 0, \qquad \nu\in L^2(0,1).
\end{equation}

The eigenvalues of the Sturm-Liouville operator $\mathcal{L}$ generated on the interval (0,1) by the differential expression \eqref{St-L} with the boundary conditions \eqref{Dirihle} are real (\cite{Kost}) and given by
\begin{equation}\label{e-val}
    \lambda_n=(\pi n)^2(1+o(n^{-1})),\qquad n=1,2,...,
\end{equation}
and the corresponding eigenfunctions are
\begin{equation}\label{sol-SL}
    \Tilde{\phi}_n(x)=r_n(x)\sin\theta_n(x),
\end{equation}
where
$$r_n(x)=\exp{\left(-\int\limits_0^x \nu(s)\cos2\theta_n(s)ds+o(1)\right)}=1+o(1),$$
$$\theta_n(x)=\sqrt{\lambda_n}x+o(1),$$
for $n\to \infty$.
According to \eqref{con-q}, \eqref{e-val} and \eqref{sol-SL} it is clear that the $\Tilde{\varphi}_n$ are real. Here and below we will have the positive operator $\langle \mathcal{L}y, y\rangle\geq 0$, which implies that all eigenvalues $\lambda_n$ are real and non-negative.

The first derivatives of $\Tilde{\phi_n}$ are given by the formulas
\begin{equation}\label{phi-der}
    \Tilde{\phi}'_n(x)=\sqrt{\lambda_n}r_n(x)\cos(\theta_n(x))+\nu(x)\Tilde{\phi}_n(x).
\end{equation}

According to Theorem 2 in \cite{Savch} we have
\begin{equation}\label{phi-sav}
\Tilde{\phi}_n(x)=\sin{\sqrt{\lambda_n}x}+\psi_n(x), \quad n=1,2,...,\quad \sum\limits_{n=1}^\infty \|\psi_n\|^2\leq C \int\limits_0^1|\nu(x)|^2dx.
\end{equation}

On the other hand, we can estimate the $\|\Tilde{\phi}_n\|_{L^2}$ using the formula \eqref{sol-SL} as follows
\begin{eqnarray}\label{est-high}
\|\Tilde{\phi}_n\|^2_{L^2}&\lesssim& \exp{\left(\|\nu\|_{L^2}+\lambda^{-\frac{1}{2}}\|\nu\|^2_{L^2}\right)}<\infty.
\end{eqnarray}

Also, according to Theorem 4 in \cite{Sav-Shk}, we have
\begin{equation}\label{est_low}
  \Tilde{\phi}_n(x)=\sin(\pi nx)+o(1)  
\end{equation}
for sufficiently large $n$. Along with \eqref{sol-SL}, we see that there exist some $C_0>0$, such that
\begin{equation}\label{low-est}
0<C_0\leq\|\Tilde{\phi}_n\|_{L^2}<\infty
\end{equation}
for all $n$. 

Since the eigenfunctions of the Sturm-Liouville operator form an orthogonal basis in $L^2(0,1)$, we normalize them for further use  
\begin{equation}\label{norm-phi}
  \phi_n(x)=\frac{\Tilde{\phi}_n(x)}{\sqrt{\langle \Tilde{\phi}_n,\Tilde{\phi}_n}\rangle}=\frac{\Tilde{\phi}_n(x)}{\|\Tilde{\phi}_n\|_{L^2}}.  
\end{equation}

\section{Non-homogeneous Schrödinger equation}

We consider the non-homogeneous Schrödinger equation with initial/boundary conditions
\begin{equation}\label{nonh}
    \left\{\begin{array}{l}
    i\partial_t u(t,x)+a(t)\mathcal{L}^s u(t,x)=f(t,x),\qquad (t,x)\in [0,T]\times (0,1),\\
    u(0,x)=u_0(x),\quad x\in (0,1),\\
    u(t,0)=0=u(t,1),\quad t\in[0,T],
    \end{array}\right.
\end{equation}
where $a(t)\geq a_0>0$ for $t\in [0,T]$ and $a\in L^\infty [0,T]$, $s\in\mathbb{R}$, with operator $\mathcal{L}$ defined by 
\begin{equation}\label{op-L}
  \mathcal{L}=-\frac{\partial^2}{\partial x^2}+q(x),\qquad x\in(0,1),  
\end{equation}
and $q=\nu'\geq0$, $\nu \in L^2(0,1)$.

It is well known (\cite{RYS}, \cite{RYes}) that the general solution to this equation is
$$u(t,x)=u_{1}(t,x)+u_{2}(t,x),$$
where $u_{1}(t,x)$ is the general solution of the homogeneous Schrödinger equation
\begin{equation}\label{C.p1}
         i\partial_t u(t,x)+a(t)\mathcal{L}^s u(t,x)=0,\qquad (t,x)\in [0,T]\times (0,1),
\end{equation}
with initial condition
\begin{equation}\label{C.p2} 
u(0,x)=u_0(x),\,\,\, x\in (0,1),
\end{equation}
and with Dirichlet boundary conditions
\begin{equation}\label{C.p3}
u(t,0)=0=u(t,1),\qquad t\in [0,T],
\end{equation}
and $u_{2}(t,x)$ is the particular solution of the non-homogeneous Schrödinger equation with initial/boundary conditions \eqref{nonh}. In other words, to get a solution to \eqref{nonh} we need to consider problem \eqref{C.p1}-\eqref{C.p3}.

In our results below, concerning the initial/boundary problem \eqref{C.p1}-\eqref{C.p3}, as the preliminary step we first carry out the analysis in the regular case for bounded $q \in L^\infty(0,1)$. In this case, we obtain the well-posedness in the Sobolev spaces $W^k_\mathcal{L}$ associated to the operator $\mathcal{L}$: we define the Sobolev spaces $W^k_\mathcal{L}$ associated to $\mathcal{L}$, for any $k \in \mathbb{R}$, as the space  
$$W^k_\mathcal{L}:=\left\{f\in \mathcal{D}'_\mathcal{L}(0,1):\,\mathcal{L}^{k/2}f\in L^2(0,1)\right\},$$
with the norm $\|f\|_{W^k_\mathcal{L}}:=\|\mathcal{L}^{k/2}f\|_{L^2}$. The global space of distributions $\mathcal{D}'_\mathcal{L}(0,1)$ is defined as follows.

The space $C^\infty_\mathcal{L}(0,1):=\mathrm{Dom}(\mathcal{L}^\infty)$ is called the space of test functions for $\mathcal{L}$, where we define 
$$\mathrm{Dom}(\mathcal{L}^\infty):=\bigcap\limits_{m=1}^\infty \mathrm{Dom}(\mathcal{L}^m),$$
where $\mathrm{Dom}(\mathcal{L}^m)$ is the domain of the operator $\mathcal{L}^m$, in turn defined as
$$\mathrm{Dom}(\mathcal{L}^m):=\left\{f\in L^2(0,1): \mathcal{L}^j f\in \mathrm{Dom}(\mathcal{L}),\,\, j=0,1,2,...,m-1\right\}.$$
The Fréchet topology of $C^\infty_\mathcal{L}(0,1)$ is given by the family of norms 
\begin{equation}\label{frechet}
    \|\phi\|_{C^m_\mathcal{L}}:=\max\limits_{j\leq m}\|\mathcal{L}^j\phi\|_{L^2(0,1)},\quad m\in \mathbb{N}_0,\,\, \phi\in C^\infty_\mathcal{L}(0,1).
\end{equation}
The space of $\mathcal{L}$-distributions
$$\mathcal{D}'_\mathcal{L}(0,1):=\mathbf{L}\left(C^\infty_\mathcal{L}(0,1),\mathbb{C}\right)$$
is the space of all linear continuous functionals on $C^\infty_\mathcal{L}(0,1)$. For $\omega \in \mathcal{D}'_\mathcal{L}(0,1)$ and $\phi\in C^\infty_\mathcal{L}(0,1)$, we shall write 
$$\omega(\phi)=\langle \omega, \phi\rangle.$$
For any $\psi \in C^\infty_\mathcal{L}(0,1)$, the functional 
$$C^\infty_\mathcal{L}(0,1)\ni \phi \mapsto \int\limits_0^1 \psi(x)\phi(x)dx$$
is an $\mathcal{L}$-distribution, which gives an embedding $\psi \in C^\infty_\mathcal{L}(0,1)\hookrightarrow \mathcal{D}'_\mathcal{L}(0,1)$.

We introduce the spaces $C^j([0,T],W^k_\mathcal{L}(0,1))$ given by the family of norms
\begin{equation}
    \|f\|_{C^n([0,T],W^k_\mathcal{L}(0,1))}=\max\limits_{0\leq t\leq T}\sum\limits_{j=0}^n\left\|\partial^j_t f(t,\cdot)\right\|_{W^k_\mathcal{L}},
\end{equation}
where $k\in \mathbb{R}, \, f\in C^j([0,T],W^k_\mathcal{L}(0,1)).$
 
\begin{thm}\label{th1}
Assume that $q \in L^\infty(0,1)$, $q\geq0$, $a(t)\geq a_0>0$ for all $t\in [0,T],$ and $a\in L^\infty[0,T]$. For any $k\in \mathbb{R}$, if the initial condition satisfies $u_0 \in W^k_\mathcal{L}$ then the Schrödinger equation \eqref{C.p1} with the initial/boundary conditions  \eqref{C.p2}-\eqref{C.p3}  has a unique solution $u\in C([0,T], W^{k}_\mathcal{L})$. We also have the following estimates:
\begin{equation}\label{eq2.1}
    \|u(t,\cdot)\|_{L^2}\lesssim \|u_0\|_{L^2},
\end{equation}
\begin{equation}\label{eq2.2}
    \|\partial_{t}u(t,\cdot)\|_{L^2}\lesssim\|a\|_{L^\infty[0,T]}\|u_0\|_{W^{2s}_\mathcal{L}}.
\end{equation}
When $s=1$, we also have
\begin{equation}\label{est3}
    \|\partial_{x}u(t,\cdot)\|_{L^2}\lesssim\|u_0\|_{W^1_\mathcal{L}}\left(1+\|\nu\|_{L^2}\right)+\|u_0\|_{L^2}\|\nu\|_{L^\infty},
\end{equation}
\begin{equation}\label{est2.4}
\left\|\partial^2_xu(t,\cdot)\right\|_{L^2} 
\lesssim \|q\|_{L^\infty} \|u_0\|_{L^2}+\|u_0\|_{W^2_\mathcal{L}},
\end{equation}
\begin{equation}\label{est5}
 \left\|u(t, \cdot)\right\|_{W^k_\mathcal{L}}\lesssim\left\|u_0\right\|_{W^k_\mathcal{L}},
\end{equation}

where the constants in these inequalities are independent of $u_0$, $\nu$, $q$ and $a$.
\end{thm}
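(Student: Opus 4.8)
The plan is to solve \eqref{C.p1}--\eqref{C.p3} by separation of variables in the orthonormal basis $\{\phi_n\}_{n\ge 1}$ of $L^2(0,1)$ of normalized eigenfunctions of $\mathcal{L}$ from \eqref{norm-phi}, and then to extract every estimate from Parseval's identity. Write the sought solution and the datum as $u(t,x)=\sum_n\widehat u_n(t)\phi_n(x)$ and $u_0=\sum_n\widehat u_{0,n}\phi_n$ with $\widehat u_{0,n}=\langle u_0,\phi_n\rangle$. Using $\mathcal{L}^s\phi_n=\lambda_n^s\phi_n$, equation \eqref{C.p1} decouples into the family of ODEs $i\,\widehat u_n'(t)+a(t)\lambda_n^s\widehat u_n(t)=0$, $\widehat u_n(0)=\widehat u_{0,n}$, whose solution is
\begin{equation*}
\widehat u_n(t)=\widehat u_{0,n}\,\exp\!\Big(i\lambda_n^s\!\int_0^t a(\tau)\,d\tau\Big).
\end{equation*}
Since $a$, $\lambda_n$ and $s$ are real, this exponential has modulus one, so $|\widehat u_n(t)|=|\widehat u_{0,n}|$ for every $t$ and $n$. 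Hence $\sum_n\lambda_n^k|\widehat u_n(t)|^2=\sum_n\lambda_n^k|\widehat u_{0,n}|^2=\|u_0\|_{W^k_\mathcal{L}}^2$ for all $t$; using $\lambda_n\ge 0$, $\lambda_n\to\infty$, and dominated convergence (the tails are dominated by $4\lambda_n^k|\widehat u_{0,n}|^2$) one gets that the series defines $u\in C([0,T],W^k_\mathcal{L})$ and proves \eqref{est5}, the case $k=0$ being \eqref{eq2.1}. Uniqueness in this class is immediate: pairing a solution with zero datum against $\phi_n\in C^\infty_\mathcal{L}(0,1)$ shows every coefficient solves the ODE with zero initial value, so $u\equiv 0$. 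For \eqref{eq2.2}, $\partial_t\widehat u_n(t)=ia(t)\lambda_n^s\widehat u_n(t)$ gives $|\partial_t\widehat u_n(t)|\le\|a\|_{L^\infty[0,T]}\lambda_n^s|\widehat u_{0,n}|$ for a.e.\ $t$ (here $\partial_t u$ exists a.e.\ since $a\in L^\infty[0,T]$), and Parseval yields $\|\partial_t u(t,\cdot)\|_{L^2}^2\le\|a\|_{L^\infty[0,T]}^2\|u_0\|_{W^{2s}_\mathcal{L}}^2$.

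It remains to treat the two $x$-derivatives when $s=1$; note that $q=\nu'\in L^\infty(0,1)$ forces $\nu$ to be Lipschitz, so $\|\nu\|_{L^\infty}$ is finite. For \eqref{est2.4} I would use the pointwise identity $-\partial_x^2 u=\mathcal{L}u-q u$ together with $\|\mathcal{L}u(t,\cdot)\|_{L^2}=\|u(t,\cdot)\|_{W^2_\mathcal{L}}=\|u_0\|_{W^2_\mathcal{L}}$ (again by the modulus-one phase) and $\|q u(t,\cdot)\|_{L^2}\le\|q\|_{L^\infty}\|u_0\|_{L^2}$; the triangle inequality is exactly \eqref{est2.4}. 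For \eqref{est3} I would differentiate the series term by term and insert \eqref{phi-der} in normalized form, obtaining
\begin{equation*}
\partial_x u(t,x)=\sum_n\widehat u_n(t)\,\sqrt{\lambda_n}\,\frac{r_n(x)\cos\theta_n(x)}{\|\widetilde\phi_n\|_{L^2}}\;+\;\nu(x)\,u(t,x),
\end{equation*}
since $\nu\sum_n\widehat u_n\phi_n=\nu u$. The second summand has $L^2$ norm at most $\|\nu\|_{L^\infty}\|u_0\|_{L^2}$. For the first summand one checks that $\big\{r_n\cos\theta_n/\|\widetilde\phi_n\|_{L^2}\big\}_n$ is a Bessel system in $L^2(0,1)$ with bound controlled by $1+\|\nu\|_{L^2}$, comparing $r_n\cos\theta_n$ with $\cos(\pi n x)$ via \eqref{sol-SL} and \eqref{est_low}, and using \eqref{phi-sav}, \eqref{est-high} and the lower bound \eqref{low-est}; then its $L^2$ norm is $\lesssim(1+\|\nu\|_{L^2})\big(\sum_n\lambda_n|\widehat u_n(t)|^2\big)^{1/2}=(1+\|\nu\|_{L^2})\|u_0\|_{W^1_\mathcal{L}}$, and adding the two bounds gives \eqref{est3}.

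The routine points still to be verified are the legitimacy of the term-by-term differentiation in $x$ and of exchanging $\partial_t$ with the sum (both follow from the uniform-in-$t$ convergence of the relevant series, which converge because $u_0\in W^k_\mathcal{L}$, with $k\ge 1$ resp.\ $k\ge 2$ for the $x$-derivatives), and the continuity $t\mapsto u(t,\cdot)$ in the respective norms, which is again dominated convergence applied to $\sum_n\lambda_n^k|\widehat u_n(t)-\widehat u_n(t')|^2$ using continuity of each $\widehat u_n$. The one genuinely nontrivial step is the Bessel-bound claim for $\big\{r_n\cos\theta_n/\|\widetilde\phi_n\|_{L^2}\big\}_n$ with a constant depending on $\nu$ only through $\|\nu\|_{L^2}$ (and not through $\|\nu\|_{L^\infty}$): this is where the asymptotics of the eigenfunctions from \cite{Sav-Shk}, \cite{Savch} recalled in \eqref{phi-sav}--\eqref{low-est} are used, in the same spirit as the spectral estimates for the wave equation in \cite{RYS}, \cite{RYes}. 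In all the bounds above the implicit constants hidden in $\lesssim$ are absolute, the whole dependence on $\nu$, $q$ and $a$ being displayed explicitly, as claimed.
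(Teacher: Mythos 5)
Your proposal is correct and follows essentially the same route as the paper: expansion in the normalized eigenfunctions $\phi_n$, the unimodular phase $e^{i\lambda_n^s\int_0^t a(\tau)d\tau}$, and Parseval's identity, with the $x$-derivative bounds obtained from $\phi_n'$ as in \eqref{phi-der} and from $\phi_n''=(q-\lambda_n)\phi_n$. The only differences are cosmetic: you prove \eqref{est2.4} via the identity $-\partial_x^2u=\mathcal{L}u-qu$ instead of termwise, and for \eqref{est3} you state explicitly the Bessel-type bound for the system $\bigl\{r_n\cos\theta_n/\|\Tilde{\phi}_n\|_{L^2}\bigr\}$ with constant controlled by $1+\|\nu\|_{L^2}$ — a step the paper performs implicitly when it passes from the square of the sum to the sum of squares using $r_n=1+\rho_n$, $\|\rho_n\|_{L^2}\lesssim\|\nu\|_{L^2}$.
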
 

We note that $q\in L^\infty(0,1)$ implies that $\nu \in L^\infty(0,1)$ and hence $\nu\in L^2(0,1)$, so that the formulas in the introduction hold true.

\begin{proof}
We apply the technique of the separation of variables (see, e.g. \cite{Separ}). In particular, we are looking for a solution of the form
$$u(t,x)=T(t)X(x),$$
where $T(t)$, $X(x)$ are unknown functions that must be determined. Substituting $u(t,x)=T(t)X(x)$ into equation \eqref{C.p1} and after simple transformations, we get for the function $T(t)$ the equation
\begin{equation}\label{3}
        T'(t)=i\mu a(t)T(t),  \qquad t\in [0,T],
\end{equation}
and for the function $X(x)$ we get
\begin{equation}\label{4}
    \mathcal{L}^sX(x)=\mu X(x),
\end{equation}
where $\mu$ is a spectral parameter. When $s=1$, we obtain the Sturm-Liouville boundary value problem
\begin{equation}\label{4+}
 \mathcal{L}X(x):=-X''(x)+q(x)X(x)=\lambda X(x),   
\end{equation}
\begin{equation}\label{5}
    X(0)=X(1)=0.
\end{equation}

The equation \eqref{4+} with the boundary condition \eqref{5} has the eigenvalues of the form \eqref{e-val} with the corresponding eigenfunctions of the form \eqref{sol-SL} of the Sturm-Liouville operator $\mathcal{L}$ generated by the differential expression \eqref{St-L}. Substituting 
$$\mu_n=\lambda_n^s,$$
we get the eigenvalues of the form \eqref{e-val} and the corresponding eigenfunctions of the form \eqref{sol-SL} for the equation \eqref{4}, i.e. 
\begin{equation}\label{l^s}
  \mathcal{L}^s\phi_n(x)=\lambda_n^s\phi_n(x).  
\end{equation}

The solution of the equation \eqref{3} with the initial conditions \eqref{C.p2} is
$$T_n(t)=D_n e^{i\lambda_n^s\int\limits_0^t a(\tau)d\tau},$$
where
$$D_n=\int\limits_0^1u_0(x)\phi_n(x)dx.$$

Thus, the solution of the homogeneous Schrödinger equation \eqref{C.p1}  with the initial/boundary conditions \eqref{C.p2}-\eqref{C.p3} has the form
\begin{equation}\label{23}
    u(t,x)=\sum\limits_{n=1}^\infty D_ne^{i \lambda_n^s \int\limits_0^t a(\tau)d\tau}\phi_n(x). 
\end{equation}

Further we will prove that $u\in C^2([0,T],L^2(0,1))$. By using the Cauchy-Schwarz inequality and fixed $t$, we can deduce that

\begin{eqnarray}\label{25-0}
\|u(t, \cdot)\|^2_{L^2}&=&\int\limits_0^1|u(t,x)|^2dx =\int\limits_0^1\left|\sum\limits_{n=1}^\infty D_n e^{i \lambda_n^s \int\limits_0^t a(\tau)d\tau}\phi_n(x)\right|^2dx\nonumber\\
&\lesssim& \int\limits_0^1\sum\limits_{n=1}^\infty\left|D_n e^{i \lambda_n^s \int\limits_0^t a(\tau)d\tau}\right|^2|\phi_n(x)|^2dx.
\end{eqnarray}
According to \eqref{e-val}, \eqref{norm-phi} using Euler's formula and Parseval's identity, we obtain
\begin{eqnarray}\label{25}
\|u(t, \cdot)\|^2_{L^2}&\lesssim& \int\limits_0^1\sum\limits_{n=1}^\infty\left|D_n e^{i \lambda_n^s \int\limits_0^t a(\tau)d\tau}\right|^2|\phi_n(x)|^2dx=\sum\limits_{n=1}^\infty\left|D_n\right|^2\int\limits_0^1|\phi_n(x)|^2dx\nonumber\\
&=&\sum\limits_{n=1}^\infty\left|D_n\right|^2=\int\limits_0^1|u_0(x)|^2dx=\|u_0\|^2_{L^2}.
\end{eqnarray}

Since $a\in L^\infty[0,T]$ and using \eqref{25} we obtain
\begin{eqnarray}\label{t26}
\|\partial_t u(t,\cdot)\|^2_{L^2}&=&\int\limits_0^1|\partial_tu(t,x)|^2dt = \int\limits_0^1\left|\sum\limits_{n=1}^\infty\left((i \lambda_n^s) a(t)D_n e^{i \lambda_n^s\int\limits_0^t a(\tau)d\tau}\phi_n(x)\right)\right|^2dx \nonumber\\ 
&\lesssim& \int\limits_0^1\sum\limits_{n=1}^\infty|a(t)|^2\left|\lambda_n^s D_n e^{i \lambda_n^s \int\limits_0^t a(\tau)d\tau}\right|^2|\phi_n(x)|^2dx \nonumber\\
&\leq&\sum\limits_{n=1}^\infty\|a\|^2_{L^\infty[0,T]}\left|\lambda_n^s D_n \right|^2\int\limits_0^1|\phi_n(x)|^2dx\nonumber\\
&=&\|a\|^2_{L^\infty[0,T]}\sum\limits_{n=1}^\infty|\lambda_n^s D_n |^2.
\end{eqnarray}
Since $\lambda_n$ are eigenvalues and $\phi_n$ are eigenfunctions of the operator $\mathcal{L},$ using Parseval's identity we obtain 
\begin{eqnarray}\label{21-1}\sum\limits_{n=1}^\infty|\lambda_n^s D_n|^2&=& \sum\limits_{n=1}^\infty\left|\lambda_n^s\int\limits_0^1 u_0(x)\phi_n(x)dx\right|^2 =\sum\limits_{n=1}^\infty\left| \int\limits_0^1 \lambda_n^su_0(x)\phi_n(x)dx\right|^2\nonumber\\
&=&\sum\limits_{n=1}^\infty\left| \int\limits_0^1 \mathcal{L}^su_0(x)\phi_n(x)dx\right|^2=\|\mathcal{L}^su_0\|^2_{L^2}=\|u_0\|^2_{W^{2s}_{\mathcal{L}}}.
\end{eqnarray}
Thus, 
$$\|\partial_t u(t,\cdot)\|^2_{L^2}\lesssim \|a\|^2_{L^\infty[0,T]}\|u_0\|^2_{W^{2s}_\mathcal{L}}.$$

Let $s=1$, then to estimate the norm of $\partial_x u(t,\cdot)$ in $L^2$ we use \eqref{phi-der} and \eqref{norm-phi} for $\phi'_n$:
\begin{eqnarray*}
\|\partial_x u(t,\cdot)\|^2_{L^2}&=&\int\limits_0^1|\partial_xu(t,x)|^2dt=
\int\limits_0^1\left|\sum\limits_{n=1}^\infty D_ne^{i \lambda_n \int\limits_0^t a(\tau)d\tau}\phi'_n(x)\right|^2dx \nonumber \\ &=&\int\limits_0^1\left|\sum\limits_{n=1}^\infty D_ne^{i \lambda_n \int\limits_0^t a(\tau)d\tau}\left(\frac{\sqrt{\lambda_n}r_n(x) \cos\theta_n(x)}{\|\Tilde{\phi}_n\|_{L^2}}+\nu(x)\phi_n(x)\right)\right|^2dx.
\end{eqnarray*}
According to \eqref{25}, \eqref{est-high} and \eqref{est_low}, there exist some $C_0>0$, such that $C_0<\|\Tilde{\phi}_n\|_{L^2}<\infty$, so that
\begin{eqnarray*}
\|\partial_x u(t,\cdot)\|^2_{L^2}&\lesssim&\sum\limits_{n=1}^\infty\left(\left|\sqrt{\lambda_n}D_n\right|^2\int\limits_0^1|r_n(x)|^2dx\right)+\sum\limits_{n=1}^\infty\left(|D_n|^2\int\limits_0^1|\nu(x)\phi_n(x)|^2dx\right).
\end{eqnarray*}
Here for $r_n(x)$ according Theorem 2 in \cite{Savch}, we have
$$r_n(x)=1+\rho_n(x),\quad \|\rho_n\|_{L^2}\lesssim \|\nu\|_{L^2},$$
where the constant is independent of $\nu$ and $n$. Therefore,
$$\int\limits_0^1|r_n(x)|^2dx\lesssim 1+\|\nu\|^2_{L^2}.$$
For second term we obtain
$$\int\limits_0^1|\nu(x)\phi_n(x)|^2dx\leq \|\nu\|^2_{L^\infty}\|\phi_n\|^2_{L^2}=\|\nu\|^2_{L^\infty},$$
since $\{\phi_n\}$ is an orthonormal basis in $L^2$.
Using the property of the operator $\mathcal{L}$ and the Parseval identity, we obtain
\begin{eqnarray*}
 \sum\limits_{n=1}^\infty\left|\sqrt{\lambda_n}D_n\right|^2&=& \sum\limits_{n=1}^\infty\left|\int\limits_0^1\sqrt{\lambda_n}u_0(x)\phi_n(x)dx\right|^2\\
 &=&\sum\limits_{n=1}^\infty\left|\int\limits_0^1\mathcal{L}^\frac{1}{2}u_0(x)\phi_n(x)dx\right|^2=\left\|\mathcal{L}^\frac{1}{2}u_0\right\|^2_{L^2}=\|u_0\|^2_{W^1_\mathcal{L}}.
\end{eqnarray*}
Using the last relations, we obtain
\begin{eqnarray}\label{u_x}
\|\partial_x u(t,\cdot)\|^2_{L^2}&\lesssim&\sum\limits_{n=1}^\infty\left|\sqrt{\lambda_n}D_n\right|^2\left(1+\|\nu\|^2_{L^2}\right)+\sum\limits_{n=1}^\infty \left|D_n\right|^2\|\nu\|^2_{L^\infty}\nonumber\\
&\leq& \|u_0\|^2_{W^1_\mathcal{L}}\left(1+\|\nu\|^2_{L^2}\right)+\|u_0\|^2_{L^2}\|\nu\|^2_{L^\infty},
\end{eqnarray}
implying \eqref{est3}.

Let us get next estimate by using that $\phi''_n(x)=(q(x)- \lambda_n)\phi_n(x)$ in the case when $s=1$,
\begin{eqnarray}\label{u_xx}
\left\|\partial_x^2u(t, \cdot)\right\|^2_{L^2}&=&\int\limits_0^1\left|\partial^2_xu(t,x)\right|^2dx=\int\limits_0^1\left|\sum\limits_{n=1}^\infty D_n e^{i \lambda_n \int\limits_0^t a(\tau)d\tau} \phi''_n(x)\right|^2dx\nonumber\\
&\lesssim& \int\limits_0^1\left(\sum\limits_{n=1}^\infty \left|{D_n} e^{i \lambda_n \int\limits_0^t a(\tau)d\tau}\right|^2|(q(x)- \lambda_n)\phi_n(x)|^2\right)dx\nonumber\\
&\lesssim&\int\limits_0^1|q(x)|^2\sum\limits_{n=1}^\infty \left|D_n\right|^2|\phi_n(x)|^2dx+\int\limits_0^1\sum\limits_{n=1}^\infty |\lambda_n D_n|^2|\phi_n(x)|^2 dx\nonumber\\
&\leq&\|q\|^2_{L^\infty}\sum\limits_{n=1}^\infty |D_n|^2 +\sum\limits_{n=1}^\infty \left|\lambda_n D_n\right|^2. 
\end{eqnarray}
Taking into account \eqref{21-1} for the last terms in \eqref{u_xx}, we obtain
\begin{eqnarray}\label{lB}
 \sum\limits_{n=1}^\infty\left|\lambda_n D_n\right|^2=\|u_0\|^2_{W^2_\mathcal{L}},
\end{eqnarray}
using the last expressions and \eqref{21-1} we finally get
\begin{eqnarray*}
\left\|\partial^2_xu(t,\cdot)\right\|^2_{L^2} 
&\lesssim &\|q\|^2_{L^\infty} \|u_0\|^2_{L^2}+\|u_0\|^2_{W^2_\mathcal{L}},
\end{eqnarray*}
implying \eqref{est2.4}.

Let us carry out the last estimate \eqref{est5} using that $\mathcal{L}^ku=\lambda_n^ku$ and Parseval's identity:
\begin{eqnarray*}
 \left\|u(t, \cdot)\right\|^2_{W^k_\mathcal{L}}&=&\left\|\mathcal{L}^\frac{k}{2}u(t, \cdot)\right\|^2_{L^2}=\int\limits_0^1\left|\mathcal{L}^\frac{k}{2}u(t,x)\right|^2dx=\int\limits_0^1\left|\sum\limits_{n=1}^\infty  D_n e^{i \lambda_nt}\lambda_n^\frac{k}{2}\phi_n(x)\right|^2dx\\
 &\lesssim&\sum\limits_{n=1}^\infty \left| \lambda_n^\frac{k}{2}D_n\right|^2=\left\|\mathcal{L}^\frac{k}{2}u_0\right\|^2_{L^2}=\left\|u_0\right\|^2_{W^k_\mathcal{L}}.
\end{eqnarray*}
The proof of Theorem \ref{th1} is complete.
\end{proof}

The following statement removes the reliance on Sobolev spaces with respect to $\mathcal{L}$ while sacrificing the regularity of the data. This statement will be important for further analysis.

\begin{cor}\label{cor1} Let $s=1$.
Assume that $q,\, \nu \in L^\infty(0,1)$, $q\geq0$, $a(t)\geq a_0>0$ for all $t\in [0,T]$, and $a\in L^\infty[0,T]$. If the initial condition satisfies $u_0 \in L^2(0,1)$ and $u_0''\in L^2(0,1)$ then the Schrödinger equation \eqref{C.p1} with the initial/boundary conditions  \eqref{C.p2}-\eqref{C.p3}  has a unique solution $u\in C([0,T], L^2(0,1))$ which satisfies the estimates
\begin{equation}\label{ec1}
      \|u(t,\cdot)\|_{L^2}\lesssim \|u_0\|_{L^2},
\end{equation}
\begin{equation}\label{ec2}
\|\partial_t u(t,\cdot)\|_{L^2}\lesssim \|a\|_{L^\infty[0,T]}\left(\|u''_0\|_{L^2}+\|q\|_{L^\infty}\|u_0\|_{L^2}\right),
\end{equation}
\begin{eqnarray}\label{ec3}
\|\partial_{x}u(t,\cdot)\|_{L^2}\lesssim \left(\|u''_0\|_{L^2}+\|q\|_{L^\infty}\|u_0\|_{L^2}\right)\left(1 +\|\nu\|_{L^2}\right)+  \|u_0\|_{L^2}\|\nu\|_{L^\infty},
\end{eqnarray}
\begin{equation}\label{ec4}
\left\|\partial^2_xu(t,\cdot)\right\|_{L^2} 
\lesssim \|u''_0\|_{L^2}+\|q\|_{L^\infty}\|u_0\|_{L^2},
\end{equation}
where  the constants in these inequalities are independent of $u_0$, $q$ and $a$.
\end{cor}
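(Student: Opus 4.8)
\section*{Proof proposal}

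The plan is to obtain Corollary~\ref{cor1} as a direct consequence of Theorem~\ref{th1} with $s=1$, the only new ingredient being the passage from the $\mathcal{L}$-Sobolev norms of the datum appearing on the right-hand sides of \eqref{eq2.1}--\eqref{est2.4} to the ordinary $L^2$ norms $\|u_0\|_{L^2}$ and $\|u_0''\|_{L^2}$. Since $\mathcal{L}=-\partial_x^2+q$ with $q\in L^\infty(0,1)$, the basic inequality is
\[
\|u_0\|_{W^2_\mathcal{L}}=\|\mathcal{L}u_0\|_{L^2}=\bigl\|-u_0''+qu_0\bigr\|_{L^2}\le \|u_0''\|_{L^2}+\|q\|_{L^\infty}\|u_0\|_{L^2}.
\]
In particular the hypotheses $u_0,u_0''\in L^2(0,1)$, together with the Dirichlet condition implicit in problem \eqref{C.p1}--\eqref{C.p3} (which places $u_0$ in $\mathrm{Dom}(\mathcal{L})$), already guarantee $u_0\in W^2_\mathcal{L}$, so Theorem~\ref{th1} applies with $k=2$ and produces a unique solution $u\in C([0,T],W^2_\mathcal{L})\subset C([0,T],L^2(0,1))$, given by the series \eqref{23}.

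I would next record the companion bound for the first-order norm. Because $q\ge 0$, the min--max principle gives $\lambda_n\ge\lambda_1\ge\pi^2$ for every $n$, hence the continuous embeddings $W^2_\mathcal{L}\hookrightarrow W^1_\mathcal{L}\hookrightarrow L^2(0,1)$ with
\[
\|u_0\|_{W^1_\mathcal{L}}\le \pi^{-1}\|u_0\|_{W^2_\mathcal{L}}\lesssim \|u_0''\|_{L^2}+\|q\|_{L^\infty}\|u_0\|_{L^2}.
\]

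Granting these two inequalities, the four estimates of the corollary become pure substitutions: \eqref{ec1} is \eqref{eq2.1} verbatim; \eqref{ec2} follows from \eqref{eq2.2} with $2s=2$ and the first displayed bound; \eqref{ec4} follows from \eqref{est2.4} after replacing $\|u_0\|_{W^2_\mathcal{L}}$ and merging the two resulting copies of $\|q\|_{L^\infty}\|u_0\|_{L^2}$; and \eqref{ec3} follows from \eqref{est3} after replacing $\|u_0\|_{W^1_\mathcal{L}}$ by the second displayed bound. For uniqueness in the wider class $C([0,T],L^2(0,1))$ --- which is a priori weaker than the uniqueness asserted in Theorem~\ref{th1} --- I would invoke the energy identity $\tfrac{d}{dt}\|u(t,\cdot)\|_{L^2}^2=2\,\mathrm{Re}\,\langle\partial_t u,u\rangle=-2a(t)\,\mathrm{Im}\,\langle\mathcal{L}u,u\rangle=0$, valid since $\langle\mathcal{L}u,u\rangle$ is real, so that $\|u(t,\cdot)\|_{L^2}=\|u_0\|_{L^2}$ and a solution issuing from $u_0=0$ must vanish; equivalently, pairing \eqref{C.p1} with each $\phi_n$ decouples it into the scalar ODE \eqref{3}, whose solution with the prescribed Fourier coefficient is forced to be \eqref{23}.

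The computations are routine, so the only point that needs a moment's care is the claim $u_0\in W^2_\mathcal{L}$: one must check that the pointwise differential operator $-\partial_x^2+q$ acts on $u_0$ exactly as the self-adjoint realization $\mathcal{L}$ used to define the scale $W^k_\mathcal{L}$. This is precisely where the boundary behaviour of $u_0$ and the bound $q\in L^\infty$ (ensuring $qu_0\in L^2$) are used; once this identification is in place, Corollary~\ref{cor1} is an immediate corollary of Theorem~\ref{th1}.
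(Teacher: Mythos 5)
Your proposal is correct and follows essentially the same route as the paper: the paper likewise reduces everything to the estimates of Theorem~\ref{th1} (at the level of the Fourier coefficients, via $\lambda_n D_n=\langle -u_0''+qu_0,\phi_n\rangle$ and Parseval, which is exactly your bound $\|u_0\|_{W^2_\mathcal{L}}\le\|u_0''\|_{L^2}+\|q\|_{L^\infty}\|u_0\|_{L^2}$, together with $\sqrt{\lambda_n}\lesssim\lambda_n$ in place of your explicit embedding $W^2_\mathcal{L}\hookrightarrow W^1_\mathcal{L}$). Your added remarks on $u_0\in\mathrm{Dom}(\mathcal{L})$ and the energy-identity uniqueness are extra care the paper leaves implicit, not a different method.
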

\begin{proof}
The inequality \eqref{ec1} immediately follows from \eqref{eq2.1}. Let us move on to estimating the inequality \eqref{ec2}.
In Theorem \ref{th1} we obtained estimates with respect to the operator $\mathcal{L}$, but here we want to obtain estimates with respect to the initial condition $u_0$ and potential $q(x)$. 

By \eqref{t26} we have
\begin{equation*}
\|\partial_t u(t,\cdot)\|^2\lesssim \|a\|^2_{L^\infty[0,T]}\sum\limits_{n=1}^\infty|\lambda_n D_n |^2.
\end{equation*}
Since $\lambda_n$ are the eigenvalues of the operator $\mathcal{L}$, we obtain 
\begin{eqnarray}\label{21}\sum\limits_{n=1}^\infty|\lambda_n D_n|^2&=& \sum\limits_{n=1}^\infty\left| \int\limits_0^1 \lambda_n u_0(x)\phi_n(x)dx\right|^2
=\sum\limits_{n=1}^\infty\left| \int\limits_0^1 \left(-u''_0(x)+q(x)u_0(x)\right)\phi_n(x)dx\right|^2\nonumber \\
&\lesssim& \sum\limits_{n=1}^\infty\left| \int\limits_0^1 u''_0(x)\phi_n(x)dx\right|^2+\sum\limits_{n=1}^\infty\left|\int\limits_0^1q(x)u_0(x)\phi_n(x)dx\right|^2.
      \end{eqnarray}
Using Parseval's identity for the first and second term in \eqref{21} and since $q\in L^\infty$, we have
\begin{eqnarray}\label{LA}
\sum\limits_{n=1}^\infty|\lambda_n D_n|^2&\lesssim&\sum\limits_{n=1}^\infty\left|\int\limits_0^1q(x)u_0(x)\phi_n(x)dx\right|^2+\sum\limits_{n=1}^\infty\left| \int\limits_0^1 u''_0(x)\phi_n(x)dx\right|^2\nonumber\\
&=&\sum\limits_{n=1}^\infty\left|\langle (q u_0),\phi_n\rangle\right|^2+\sum\limits_{n=1}^\infty |u_{0,n}''|^2=\|q u_0\|^2_{L^2}+\|u_0''\|^2_{L^2}\nonumber\\
&\leq& \|q\|^2_{L^\infty} \|u_0\|^2_{L^2}+\|u_0''\|^2_{L^2}.
\end{eqnarray}
Thus, 
$$\|\partial_t u(t,\cdot)\|^2_{L^2}\lesssim \|a\|^2_{L^\infty[0,T]}\left(\|u''_0\|^2_{L^2}+\|q\|^2_{L^\infty}\|u_0\|^2_{L^2}\right),$$
proving \eqref{ec2}.

Taking into account \eqref{u_x}, \eqref{e-val}, using \eqref{LA} and Parseval's identity we get
\begin{eqnarray*}
\|\partial_x u(t,\cdot)\|^2_{L^2}&\lesssim&\sum\limits_{n=1}^\infty\left|\sqrt{\lambda_n}D_n\right|^2\left(1+\|\nu\|^2_{L^2}\right)+\sum\limits_{n=1}^\infty \left|D_n\right|^2\|\nu\|^2_{L^\infty}\\
&\leq& \sum\limits_{n=1}^\infty\left|\lambda_n D_n\right|^2\left(1+\|\nu\|^2_{L^2}\right)+\sum\limits_{n=1}^\infty \left|D_n\right|^2\|\nu\|^2_{L^\infty}\\
&\lesssim& \left(\|u''_0\|^2_{L^2}+\|q\|^2_{L^\infty}\|u_0\|^2_{L^2}\right)\left(1 +\|\nu\|^2_{L^2}\right)+  \|u_0\|^2_{L^2}\|\nu\|^2_{L^\infty},
\end{eqnarray*}
implying \eqref{ec3}. 

Using \eqref{u_xx}, \eqref{LA} and Parseval’s identity  we obtain
\begin{eqnarray*}
\left\|\partial_x^2u(t, \cdot)\right\|^2_{L^2}&\lesssim&\|q\|^2_{L^\infty}\sum\limits_{n=1}^\infty |D_n|^2 +\sum\limits_{n=1}^\infty \left|\lambda_n D_n\right|^2\\
&\lesssim& \|q\|^2_{L^\infty}\|u_0\|^2_{L^2}+\|u''_0\|^2_{L^2}+\|q\|^2_{L^\infty}\|u_0\|^2_{L^2}=\|u''_0\|^2_{L^2}+2\|q\|^2_{L^\infty}\|u_0\|^2_{L^2}.
\end{eqnarray*}
The proof of Corollary \ref{cor1} is complete.
\end{proof}

Using the statements for the homogeneous case one can establish the following statements for the non-homogeneous Schrödinger initial/boundary problem \eqref{nonh}.

\begin{thm}\label{non-hom}
Assume that $q \in L^\infty(0,1)$, $q\geq0$, $a\in L^\infty [0,T]$, $a(t)\geq a_0>0$ for all $t\in[0,T]$, and $f\in C^1([0,T],W^k_\mathcal{L}(0,1))$ for some $k\in \mathbb{R}$. If the initial condition satisfies $u_0 \in W^{k}_\mathcal{L}(0,1)$ then the non-homogeneous Schrödinger equation with initial/boundary conditions \eqref{nonh} has the unique solution $u\in C([0,T], W^{k}_\mathcal{L})$ which satisfies the estimates
\begin{equation}\label{es-nh1}
        \|u(t,\cdot)\|_{L^2}\lesssim \|u_0\|_{L^2}+T\|f\|_{C([0,T],L^2(0,1))},
\end{equation}
\begin{eqnarray}\label{es-nh2}
\|\partial_tu(t,\cdot)\|_{L^2}&\lesssim& \|a\|_{L^\infty[0,T]}\left(\|u_0\|_{W^{2s}_\mathcal{L}}+T \|f\|_{C^1([0,T],W^{2s}_{\mathcal{L}}(0,1))}\right)\nonumber\\
&+&T \|f\|_{C^1([0,T],L^2(0,1))}.
\end{eqnarray}
When $s=1$, we also have
\begin{equation}\label{es-nh3}
\|\partial_xu(t,\cdot)\|_{L^2} \lesssim \left(1+\|\nu\|_{L^\infty}\right)\left(\|u_0\|_{W^1_{\mathcal{L}}}+T\|f\|_{C([0,T],W^1_{\mathcal{L}}(0,1)}\right),
\end{equation}
\begin{eqnarray}\label{es-nh4}
\|\partial^2_xu(t,\cdot)\|_{L^2}&\lesssim& \|q\|_{L^\infty} \left(\|u_0\|_{L^2}+T\|f\|_{C([0,T],L^2(0,1))}\right)\nonumber\\
&+&\|u_0\|_{W^2_\mathcal{L}}+T\|f\|_{C^1([0,T],{W^2_\mathcal{L}}(0,1))},
\end{eqnarray}
where the constants in these inequalities are independent of $u_0$, $q$, $a$ and $f$.
\end{thm}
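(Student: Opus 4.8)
\emph{Approach.} The plan is to solve \eqref{nonh} by Duhamel's principle in the eigenbasis of $\mathcal{L}$ and then to read off every estimate by combining the resulting integral representation with the homogeneous bounds of Theorem~\ref{th1}. Expanding $u(t,x)=\sum_{n=1}^\infty u_n(t)\phi_n(x)$ and $f(t,x)=\sum_{n=1}^\infty f_n(t)\phi_n(x)$ with $f_n(t)=\langle f(t,\cdot),\phi_n\rangle$, and using $\mathcal{L}^s\phi_n=\lambda_n^s\phi_n$, problem \eqref{nonh} decouples into the scalar ODEs $i u_n'(t)+a(t)\lambda_n^s u_n(t)=f_n(t)$, $u_n(0)=D_n:=\langle u_0,\phi_n\rangle$. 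With $A(t):=\int_0^t a(\tau)\,d\tau$, variation of constants gives
\begin{equation*}
u_n(t)=e^{i\lambda_n^s A(t)}D_n-i\int_0^t e^{i\lambda_n^s(A(t)-A(\sigma))}f_n(\sigma)\,d\sigma ,
\end{equation*}
so that $u=u_1+u_2$, where $u_1(t,x)=\sum_n e^{i\lambda_n^s A(t)}D_n\phi_n(x)$ is precisely the homogeneous solution \eqref{23} analysed in Theorem~\ref{th1}, and $u_2(t,x)=-i\sum_{n=1}^\infty\left(\int_0^t e^{i\lambda_n^s(A(t)-A(\sigma))}f_n(\sigma)\,d\sigma\right)\phi_n(x)$.

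\emph{The key estimate and \eqref{es-nh1}.} Because $\lambda_n^s\in\mathbb{R}$ and $a$ is real-valued we have $|e^{i\lambda_n^s(A(t)-A(\sigma))}|=1$, and $\mathcal{L}^{k/2}$ multiplies the $n$-th Fourier coefficient by $\lambda_n^{k/2}$; hence, by Parseval's identity and Minkowski's integral inequality applied to the $W^k_\mathcal{L}$-valued integral, for every $k\in\mathbb{R}$ and $t\in[0,T]$,
\begin{equation*}
\|u_2(t,\cdot)\|_{W^k_\mathcal{L}}\le\int_0^t\left(\sum_{n=1}^\infty\lambda_n^k|f_n(\sigma)|^2\right)^{1/2}d\sigma=\int_0^t\|f(\sigma,\cdot)\|_{W^k_\mathcal{L}}\,d\sigma\le T\,\|f\|_{C([0,T],W^k_\mathcal{L}(0,1))}.
\end{equation*}
With $k$ arbitrary this also shows that the series defining $u_2$ converges in $C([0,T],W^k_\mathcal{L})$ (the integrand being continuous in $\sigma$ with values in $W^k_\mathcal{L}$), so $u_2\in C([0,T],W^k_\mathcal{L})$; together with $u_1\in C([0,T],W^k_\mathcal{L})$ from Theorem~\ref{th1} this gives existence of $u\in C([0,T],W^k_\mathcal{L})$. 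Taking $k=0$ and adding \eqref{eq2.1} applied to $u_1$ yields \eqref{es-nh1}. Uniqueness is immediate: the difference of two solutions solves \eqref{C.p1}--\eqref{C.p3} with $u_0=0$, hence vanishes by \eqref{eq2.1}.

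\emph{The derivative estimates.} For $\partial_t u_2$ I would use the equation rather than differentiate the series term by term: $\partial_t u_2=-i f+i a(t)\mathcal{L}^s u_2$, so $\|\partial_t u_2(t,\cdot)\|_{L^2}\le\|f(t,\cdot)\|_{L^2}+\|a\|_{L^\infty[0,T]}\|u_2(t,\cdot)\|_{W^{2s}_\mathcal{L}}$; inserting the key estimate with $k=2s$ and adding \eqref{eq2.2} for $\partial_t u_1$ gives \eqref{es-nh2} (the hypothesis $f\in C^1$ is what legitimises the term-by-term $t$-differentiation, and the extra factors of $T$ and the passage to $C^1$-norms are harmless over-estimates). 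When $s=1$, the bounds on $\partial_x u_2$ and $\partial_x^2 u_2$ are obtained by repeating the computations \eqref{u_x} and \eqref{u_xx} verbatim with $D_n$ replaced by $u_{2,n}(t)=-i\int_0^t e^{i\lambda_n(A(t)-A(\sigma))}f_n(\sigma)\,d\sigma$: the only ingredients are the eigenfunction formula \eqref{phi-der}, the identity $\phi_n''=(q-\lambda_n)\phi_n$, the decomposition $r_n=1+\rho_n$ with $\|\rho_n\|_{L^2}\lesssim\|\nu\|_{L^2}$, orthonormality of $\{\phi_n\}$, and the Parseval identities $\sum_n\lambda_n^m|u_{2,n}(t)|^2=\|u_2(t,\cdot)\|^2_{W^{2m}_\mathcal{L}}$. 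Bounding $\|u_2(t,\cdot)\|_{L^2}$, $\|u_2(t,\cdot)\|_{W^1_\mathcal{L}}$, $\|u_2(t,\cdot)\|_{W^2_\mathcal{L}}$ by the key estimate and adding the homogeneous bounds \eqref{est3} and \eqref{est2.4} — using $\|\nu\|_{L^2}\le\|\nu\|_{L^\infty}$ and $\|u_0\|_{L^2}\lesssim\|u_0\|_{W^1_\mathcal{L}}$, the latter because the $\lambda_n$ are bounded below by a positive constant — then produces \eqref{es-nh3} and \eqref{es-nh4}.

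\emph{Main obstacle.} The argument is mostly bookkeeping on top of Theorem~\ref{th1}; the only genuinely technical point is the justification of the $W^k_\mathcal{L}$-valued Duhamel estimate, i.e.\ the interchange of the infinite sum with the $\sigma$-integration together with the Minkowski inequality for the Bochner integral, which however rests only on the unimodularity $|e^{i\lambda_n^s(A(t)-A(\sigma))}|=1$ and on $f\in C([0,T],W^k_\mathcal{L})$. Once this is in hand, each of \eqref{es-nh1}--\eqref{es-nh4} is a transcription of the corresponding estimate in Theorem~\ref{th1} with $D_n$ replaced by $u_{2,n}(t)$, followed by the triangle inequality for $u=u_1+u_2$.
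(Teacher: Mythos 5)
Your proposal is correct and follows essentially the same route as the paper: eigenfunction expansion, variation of constants giving the homogeneous part plus a Duhamel term, unimodularity of the phases $e^{i\lambda_n^s(A(t)-A(\sigma))}$ together with Parseval and a H\"older/Minkowski bound in time for the Duhamel term, and the estimates of Theorem \ref{th1} for the homogeneous part. The only differences are cosmetic (Minkowski's integral inequality in place of the paper's crude H\"older step, and using the equation itself rather than term-by-term differentiation to bound $\partial_t u_2$), and your over-estimates by $T$-factors and $C^1$-norms match the paper's own conventions in \eqref{es-nh2}--\eqref{es-nh4}.
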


\begin{proof}
We can use the eigenfunctions \eqref{sol-SL} of the corresponding (homogeneous) eigenvalue problem \eqref{4}, and look for a solution in the series form
\begin{equation}\label{nonhu}
    u(t,x)=\sum\limits_{n=1}^\infty u_n(t)\phi_n(x),
\end{equation}
where
$$u_n(t) =\int\limits_0^1 u(t,x)\phi_n(x)dx.$$
We can similarly expand the source function,
\begin{equation}\label{func}
    f(t,x)=\sum\limits_{n=1}^\infty f_n(t)\phi_n(x),\qquad f_n(t)=\int\limits_0^1f(t,x)\phi_n(x)dx.
\end{equation}

Now, since we are looking for a twice differentiable function $u(t,x)$ that satisfies the homogeneous
Dirichlet boundary conditions, we can use \eqref{l^s}  to the Fourier series \eqref{nonhu} term by term and using that the $\phi_n(x)$ satisfies the equation \eqref{4} to obtain
\begin{equation}\label{L^su}
    \mathcal{L}^su(t,x) = \sum\limits_{n=1}^\infty \mathcal{L}^s\left(u_n(t)\phi_n(x)\right)=\sum\limits_{n=1}^\infty u_n(t)\lambda^s_n\phi_n(x).
\end{equation}

We can also differentiate the series \eqref{func} with respect to $t$ to obtain
\begin{equation}\label{utt}
    u_{t}(t,x) = \sum\limits_{n=1}^\infty u'_n(t)\phi_n(x),
\end{equation}
since the Fourier coefficients of $u_{t}(t,x)$ are
$$\int\limits_0^1u_{t}(t,x)\phi_n(x)dx=\frac{\partial}{\partial t}\left[\int\limits_0^1u(t,x)\phi_n(x)dx\right]=u'_n(t).$$
Differentiation under the above integral is allowed since the resulting integrand is continuous.

Substituting \eqref{utt} and \eqref{L^su} into the equation, and using \eqref{func}, we have
\begin{eqnarray*}
    i\sum\limits_{n=1}^\infty u'_n(t)\phi_n(x)&+&a(t)\sum\limits_{n=1}^\infty u_n(t)\lambda^s_n\phi_n(x)=\sum\limits_{n=1}^\infty f_n(t)\phi_n(x).
\end{eqnarray*}
Due to the completeness,
$$iu'_n(t)+\lambda^s_na(t)u_n(t)=f_n(t), \qquad n=1,2,...,$$
which are ODEs for the coefficients $u_n(t)$ of the series \eqref{nonhu}. By the method of variation of constants we get 
\begin{eqnarray*}
u_n(t)&=&D_n e^{i \lambda^s_n \int\limits_0^t a(\tau)d\tau}+ e^{i \lambda^s_n \int\limits_0^t a(\tau)d\tau} \int\limits_0^t e^{-i\lambda^s_n\int\limits_0^s a(\tau)d\tau} f_n(s)ds,
\end{eqnarray*}
where
$$D_n=\int\limits_0^1u_0(x)\phi_n(x)dx.$$
Thus, we can write a solution of the equation \eqref{nonh} in the form
\begin{eqnarray}\label{sol-nh}
u(t,x)&=&\sum\limits_{n=0}^\infty D_n e^{i \lambda^s_n \int\limits_0^t a(\tau)d\tau}\phi_n(x)\nonumber\\
&+&\sum\limits_{n=0}^\infty  e^{i \lambda^s_n \int\limits_0^t a(\tau)d\tau} \int\limits_0^t e^{-i\lambda^s_n\int\limits_0^sa(\tau)d\tau}f_n(s)ds\phi_n(x).
\end{eqnarray}

Let us estimate $\|u(t,\cdot)\|^2_{L^2}$. For this we use the estimates
\begin{eqnarray}\label{est-nonh}
\int\limits_0^1|u(t,x)|^2dx&=&\int\limits_0^1\left|\sum\limits_{n=0}^\infty D_n e^{i \lambda^s_n \int\limits_0^t a(\tau)d\tau}\phi_n(x) \right.\nonumber\\
&+&\left.\sum\limits_{n=0}^\infty  e^{i \lambda^s_n \int\limits_0^t a(\tau)d\tau} \int\limits_0^t e^{-i\lambda^s_n\int\limits_0^sa(\tau)d\tau}f_n(s)ds\phi_n(x)\right|^2 dx\nonumber \\
&\lesssim& \int\limits_0^1\left|\sum\limits_{n=0}^\infty D_n e^{i \lambda^s_n \int\limits_0^t a(\tau)d\tau}\phi_n(x)\right|^2dx\nonumber\\
&+&\int\limits_0^1\left|\sum\limits_{n=0}^\infty  e^{i \lambda^s_n \int\limits_0^t a(\tau)d\tau} \int\limits_0^t e^{-i\lambda^s_n\int\limits_0^sa(\tau)d\tau}f_n(s)ds\phi_n(x)\right|^2 dx\nonumber\\
&=& I_1 + I_2.
\end{eqnarray}
For $I_1$ by using \eqref{25-0}-\eqref{25} for the homogeneous case we have that
$$I_1:=\int\limits_0^1\left|\sum\limits_{n=0}^\infty D_n e^{i \lambda^s_n \int\limits_0^t a(\tau)d\tau}\phi_n(x)\right|^2dx\lesssim \|u_0\|^2_{L^2}.$$
Now we estimate $I_2$ in \eqref{est-nonh} taking into account that $s\in [0,t]$
\begin{eqnarray*}
I_2&:=&\int\limits_0^1\left|\sum\limits_{n=0}^\infty  e^{i \lambda^s_n \int\limits_0^t a(\tau)d\tau} \int\limits_0^t e^{-i\lambda^s_n\int\limits_0^sa(\tau)d\tau}f_n(s)ds\phi_n(x)\right|^2 dx\\
&\leq& \int\limits_0^1\left|\sum\limits_{n=0}^\infty  e^{i \lambda^s_n \int\limits_0^t a(\tau)d\tau} e^{-i\lambda^s_n\int\limits_0^ta(\tau)d\tau} \int\limits_0^t f_n(s)ds\phi_n(x)\right|^2 dx\\
&=&\int\limits_0^1\left|\sum\limits_{n=0}^\infty \int\limits_0^t f_n(s)ds\phi_n(x)\right|^2 dx\lesssim \sum\limits_{n=1}^\infty\left[\int\limits_0^t|f_n(s)|ds\right]^2.
\end{eqnarray*}
Using Holder's inequality and taking into account that $t\in [0,T]$ we get
$$\left[\int\limits_0^t|f_n(s)|ds\right]^2\leq \left[\int\limits_0^T 1\cdot| f_n(t)|dt\right]^2\leq T\int\limits_0^T| f_n(t)|^2dt,$$
since $f_n(t)$ is the Fourier coefficient of the function $f(t,x)$, and by Parseval's identity we obtain
$$
\sum\limits_{n=1}^\infty T\int\limits_0^T|f_n(t)|^2dt= T\int\limits_0^T\sum\limits_{n=1}^\infty|f_n(t)|^2dt = T\int\limits_0^T\|f(t,\cdot)\|^2_{L^2}dt.$$
Since
$$\|f\|_{C([0,T],L^2(0,1))}=\max\limits_{0\leq t\leq T}\|f(t,\cdot)\|_{L^2},$$
we arrive at the inequality
$$T\int\limits_0^T\|f(t,\cdot)\|^2_{L^2}dt\leq T^2\|f\|^2_{C([0,T],L^2(0,1))}.$$
Thus,
\begin{eqnarray}\label{I2}
I_2&:=&\int\limits_0^1\left|\sum\limits_{n=0}^\infty  e^{i \lambda_n \int\limits_0^t a(\tau)d\tau} \int\limits_0^t e^{-i\lambda_n\int\limits_0^sa(\tau)d\tau}f_n(s)ds\phi_n(x)\right|^2 dx\nonumber\\
&\lesssim&  T^2\|f\|^2_{C([0,T],L^2(0,1))}.
\end{eqnarray}

We finally get
$$
\|u(t,\cdot)\|^2_{L^2}\lesssim \|u_0\|^2_{L^2}+T^2\|f\|^2_{C([0,T],L^2(0,1))},
$$
implying \eqref{es-nh1}.

Let us estimate $\|\partial_tu(t,\cdot)\|_{L^2}$, for this we calculate $\partial_tu(t,x)$ as follows
\begin{eqnarray*}
\partial_tu(t,x)&=&\sum\limits_{n=0}^\infty i \lambda^s_na(t) D_n e^{i \lambda^s_n \int\limits_0^t a(\tau)d\tau}\phi_n(x) \\
&+&\sum\limits_{n=0}^\infty i\lambda^s_na(t) e^{i \lambda^s_n \int\limits_0^t a(\tau)d\tau} \int\limits_0^t e^{-i\lambda^s_n \int\limits_0^sa(\tau)d\tau}f_n(s)ds\phi_n(x)\\
&+&\sum\limits_{n=0}^\infty f_n(t)\phi_n(x).
\end{eqnarray*}
Then
\begin{eqnarray}\label{part_t}
\|\partial_tu(t,\cdot)\|^2_{L^2}&=&\int\limits_0^1|\partial_tu(t,x)|^2dx\lesssim \int\limits_0^1\left|\sum\limits_{n=0}^\infty i \lambda^s_n a(t) D_n e^{i \lambda^s_n \int\limits_0^t a(\tau)d\tau}\phi_n(x)\right|^2dx \nonumber \\
&+&\int\limits_0^1\left|\sum\limits_{n=0}^\infty i \lambda^s_na(t) e^{i \lambda^s_n \int\limits_0^t a(\tau)d\tau} \int\limits_0^t e^{-i\lambda^s_n\int\limits_0^sa(\tau)d\tau}f_n(s)ds\phi_n(x)\right|^2 dx\nonumber \\
&+&\int\limits_0^1\left|\sum\limits_{n=0}^\infty f_n(t)\phi_n(x)\right|^2 dx = J_1 + J_2 +J_3.
\end{eqnarray}
Here, for $J_1$ by using the \eqref{t26} and \eqref{21-1} and taking into account \eqref{func} for the function $f(t,x)$ in $J_3$, we obtain
$$\|\partial_tu(t,\cdot)\|^2_{L^2}\lesssim \|a\|^2_{L^\infty[0,T]}\|u_0\|^2_{W^{2s}_\mathcal{L}}+J_2+\|f(t,\cdot)\|^2_{L^2}.$$
To estimate $J_2$ conducting evaluations as in \eqref{I2} and taking into account \eqref{func}, we obtain
\begin{eqnarray*}\label{J_2}
J_2&:=&\int\limits_0^1\left|\sum\limits_{n=0}^\infty i\lambda^s_na(t) e^{i \lambda^s_n \int\limits_0^t a(\tau)d\tau} \int\limits_0^t e^{-i\lambda^s_n\int\limits_0^sa(\tau)d\tau}f_n(s)ds\phi_n(x)\right|^2 dx\nonumber \\
&\lesssim& \int\limits_0^1\sum\limits_{n=0}^\infty \left|\lambda^s_na(t) \int\limits_0^t f_n(s)ds\phi_n(x)\right|^2 dx\leq \|a\|^2_{L^\infty[0,T]}\sum\limits_{n=0}^\infty \left| \int\limits_0^t \lambda^s_nf_n(s)ds\right|^2 \\
&\leq&\|a\|^2_{L^\infty[0,T]}\sum\limits_{n=0}^\infty \left|\int\limits_0^T\lambda^s_nf_n(t)dt\right|^2 \leq T\|a\|^2_{L^\infty[0,T]}\int\limits_0^T\sum\limits_{n=0}^\infty \left|\lambda^s_nf_n(t)\right|^2dt\\
&=&T\|a\|^2_{L^\infty[0,T]}\int\limits_0^T\sum\limits_{n=0}^\infty \left|\int\limits_0^1\lambda^s_nf(t,x)\phi_n(x)dx\right|^2dt=T\|a\|^2_{L^\infty[0,T]}\int\limits_0^T \left\|\mathcal{L}^{2s}f(t,\cdot)\right\|^2_{L^2}dt\\
&\leq& T^2\|a\|^2_{L^\infty[0,T]}\|f\|^2_{C\left([0,T],W^{2s}_\mathcal{L}(0,1)\right)}
\end{eqnarray*}
Therefore
$$\|\partial_tu(t,\cdot)\|^2_{L^2}\lesssim \|a\|^2_{L^\infty[0,T]}\left(\|u_0\|^2_{W^{2s}_\mathcal{L}}+T^2 \|f\|^2_{C([0,T],W^{2s}_{\mathcal{L}}(0,1))}\right) + \|f\|^2_{C([0,T], L^2(0,1)},$$
implying \eqref{es-nh2}.

Let $s=1$. Then we carry out next estimate as follows:
\begin{eqnarray*}
\|\partial_xu(t,\cdot)\|^2_{L^2}&=&\int\limits_0^1|\partial_xu(t,x)|^2dx\lesssim \int\limits_0^1\left|\sum\limits_{n=0}^\infty D_n e^{i \lambda_n \int\limits_0^t a(\tau)d\tau}\phi'_n(x)\right|^2 dx\\
&+&\int\limits_0^1\left|\sum\limits_{n=0}^\infty  e^{i \lambda_n \int\limits_0^t a(\tau)d\tau} \int\limits_0^t e^{-i\lambda_n\int\limits_0^sa(\tau)d\tau}f_n(s)ds\phi'_n(x)\right|^2dx= K_1+K_2.
\end{eqnarray*}
Using \eqref{est3} we get
\begin{eqnarray*}
K_1&:=& \int\limits_0^1\left|\sum\limits_{n=0}^\infty D_n e^{i \lambda_n \int\limits_0^t a(\tau)d\tau}\phi'_n(x)\right|^2 dx\lesssim \|u_0\|^2_{W^1_\mathcal{L}}\left(1+\|\nu\|^2_{L^2}\right)+\|u_0\|^2_{L^2}\|\nu\|^2_{L^\infty}.
\end{eqnarray*}
For $K_2$, using \eqref{I2}, \eqref{est3} and \eqref{phi-der}, \eqref{norm-phi} for $\phi'_n$, we obtain
\begin{eqnarray*}
K_2&:=& \int\limits_0^1\left|\sum\limits_{n=0}^\infty  e^{i \lambda_n \int\limits_0^t a(\tau)d\tau} \int\limits_0^t e^{-i\lambda_n\int\limits_0^sa(\tau)d\tau}f_n(s)ds\phi'_n(x)\right|^2dx\\
&\leq&\int\limits_0^1\left|\sum\limits_{n=1}^\infty \int\limits_0^t f_n(s)ds\left(\frac{\sqrt{\lambda_n}r_n(x)\cos \theta_n(x)}{\|\Tilde{\phi}_n\|_{L^2}}+\nu(x)\phi_n(x)\right)\right|^2dx\\
&\lesssim& \sum\limits_{n=0}^\infty \left|\int\limits_0^t\left|\sqrt{\lambda_n}f_n(s)\right|ds\right|^2\left(1+\|\nu\|^2_{L^2}\right) + \|\nu\|^2_{L^\infty}T^2\|f\|^2_{C([0,T],L^2(0,1)}.
\end{eqnarray*}
Taking into account \eqref{func} and \eqref{I2} we get
\begin{eqnarray*}
\sum\limits_{n=0}^\infty \left|\int\limits_0^t\left|\sqrt{\lambda_n}f_n(s)\right|ds\right|^2&\leq&  \sum\limits_{n=0}^\infty \left|\int\limits_0^T\left|\sqrt{\lambda_n}f_n(t)\right|dt\right|^2 \leq T\int\limits_0^T\sum\limits_{n=0}^\infty \left|\sqrt{\lambda_n}f_n(t)\right|^2dt\\
&=&T\int\limits_0^T\sum\limits_{n=0}^\infty \left|\int\limits_0^1\sqrt{\lambda_n}f(t,x)\phi_n(x)dx\right|^2dt\\
&=&T\int\limits_0^T \left\|\mathcal{L}^{\frac{1}{2}}f(t,\cdot)\right\|^2_{L^2}dt\leq T^2\|f\|^2_{C\left([0,T],W^1_\mathcal{L}(0,1)\right)},
\end{eqnarray*}
and we finally obtain
\begin{eqnarray*}
\|\partial_xu(t,\cdot)\|^2_{L^2}&\lesssim&\left(\|u_0\|^2_{W^1_\mathcal{L}}+T^2\|f\|^2_{C\left([0,T],W^1_{\mathcal{L}}(0,1)\right)} \right)\left(1+\|\nu\|^2_{L^2}\right)\\
&+&\left(\|u_0\|^2_{L^2}+T^2\|f\|^2_{C([0,T],L^2(0,1))}\right)\|\nu\|^2_{L^\infty}\\
&\lesssim& \left(1+\|\nu\|^2_{L^\infty}\right)\left(\|u_0\|^2_{W^1_{\mathcal{L}}}+T^2\|f\|^2_{C([0,T],W^1_{\mathcal{L}}(0,1)}\right),
\end{eqnarray*}
which gives \eqref{es-nh3}.

For the estimate $\|\partial^2_xu(t,\cdot)\|_{L^2}$ we use that $\phi_n''(x)=(q(x)- \lambda_n)\phi_n(x)$, to deduce
\begin{eqnarray*}
\|\partial^2_xu(t,\cdot)\|^2_{L^2}&=&\int\limits_0^1|\partial^2_xu(t,x)|^2dx\\
&\lesssim& \int\limits_0^1\left|\sum\limits_{n=0}^\infty D_n e^{i \lambda_n \int\limits_0^t a(\tau)d\tau}(q(x)- \lambda_n)\phi_n(x)\right|^2dx \\
&+&\int\limits_0^1\left|\sum\limits_{n=0}^\infty  e^{i \lambda_n \int\limits_0^t a(\tau)d\tau} \int\limits_0^t e^{-i\lambda_n\int\limits_0^sa(\tau)d\tau}f_n(s)ds(q(x)- \lambda_n)\phi_n(x)\right|^2dx,
\end{eqnarray*}
and using \eqref{est2.4}, \eqref{I2}, we arrive at the estimates
\begin{eqnarray*}
\|\partial^2_xu(t,\cdot)\|^2_{L^2}&\lesssim& \|q\|^2_{L^\infty} \left(\|u_0\|^2_{L^2}+T^2\|f\|^2_{C([0,T],L^2(0,1))}\right)\\
&+&\|u_0\|^2_{W^2_\mathcal{L}}+T^2\|f\|^2_{C^1([0,T],{W^2_\mathcal{L}}(0,1))}.
\end{eqnarray*}
The proof of Theorem \ref{non-hom} is complete.
\end{proof}

\begin{cor}\label{cor2} 
Let $s=1$. Assume that $q\in L^2(0,1)$, $q\geq0$, $a\in L^\infty [0,T]$, $a(t)\geq a_0>0$ for all $t\in[0,T]$, and $f\in C^1([0,T],L^2(0,1))$. If the initial condition satisfies $u_0 \in L^2(0,1)$ and $u_0''\in L^2(0,1)$, then the non-homogeneous Schrödinger equation with initial/boundary conditions  \eqref{nonh} has a unique solution $u\in C([0,T], L^2(0,1))$ such that
\begin{equation}\label{ec-nh1}
    \|u(t,\cdot)\|_{L^2}\lesssim \|u_0\|_{L^2}+T\|f\|_{C([0,1],L^2(0,1))},
\end{equation}
\begin{eqnarray}\label{ec-nh2}
\|\partial_t u(t,\cdot)\|_{L^2}&\lesssim&\|a\|_{L^\infty[0,T]}\left(\|u_0''\|_{L^2}+\|q\|_{L^\infty}\|u_0\|_{L^2}+\frac{T}{a_0} \|f\|_{C^1([0,T],L^2(0,1))}\right)\nonumber\\
&+&\|f\|_{C([0,T],L^2(0,1))},
\end{eqnarray}
\begin{eqnarray}\label{ec-nh3}
\|\partial_x u(t,\cdot)\|_{L^2}
&\lesssim& \left(\|u''_0\|_{L^2}+\|q\|_{L^\infty}\|u_0\|_{L^2}+\frac{T}{a_0}\|f\|_{C^1([0,T],L^2(0,1))}\right)\left(1 +\|\nu\|_{L^2}\right)\nonumber\\
&+&\|\nu\|_{L^\infty}\left(\|u_0\|_{L^2}+T\|f\|_{C([0,T],L^2(0,1))}\right),
\end{eqnarray}
\begin{eqnarray}\label{ec-nh4}
\left\|\partial_x^2u(t, \cdot)\right\|_{L^2}&\lesssim&\|u''_0\|_{L^2}+\frac{T}{a_0}\|f\|_{C^1([0,T],L^2(0,1))}\nonumber\\
&+&\|q\|_{L^\infty}\left(\|u_0\|_{L^2}+T|f\|_{C([0,T],L^2(0,1))}\right),
\end{eqnarray}
where the constants in these inequalities are independent of $u_0$, $q$, $a$ and $f$.
\end{cor}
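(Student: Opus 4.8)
The plan is to obtain Corollary \ref{cor2} from Theorem \ref{non-hom} together with the same conversion mechanism already used in Corollary \ref{cor1}, applied term by term to the series solution \eqref{sol-nh}. Since \eqref{ec-nh1} is literally \eqref{es-nh1}, only \eqref{ec-nh2}--\eqref{ec-nh4} require work, and for each of them I would reuse the splittings from the proof of Theorem \ref{non-hom}: $\|\partial_t u(t,\cdot)\|^2_{L^2}\le J_1+J_2+J_3$, $\|\partial_x u(t,\cdot)\|^2_{L^2}\le K_1+K_2$, and the analogous decomposition of $\|\partial_x^2 u(t,\cdot)\|^2_{L^2}$ into a ``$D_n$-piece'' (depending only on $u_0$) and an ``$f$-piece''. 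In each splitting the $u_0$-piece is exactly the quantity already bounded in Theorem \ref{th1}, so invoking \eqref{LA}, i.e. $\sum_n|\lambda_n D_n|^2\lesssim\|u_0''\|^2_{L^2}+\|q\|^2_{L^\infty}\|u_0\|^2_{L^2}$, together with the fact that $\lambda_n\ge\lambda_1\ge\pi^2>1$ (hence $\sum_n|D_n|^2\le\sum_n|\sqrt{\lambda_n}D_n|^2\le\sum_n|\lambda_n D_n|^2$), one bounds these pieces by the $u_0$-dependent terms in \eqref{ec-nh2}--\eqref{ec-nh4}, precisely as in the proof of Corollary \ref{cor1}.

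The new ingredient is the inhomogeneous (Duhamel) pieces. The term $J_3$ equals $\|f(t,\cdot)\|^2_{L^2}$, which is harmless. The delicate contributions are those carrying an extra factor $\lambda_n$ produced by $\mathcal{L}^s=\mathcal{L}$, i.e. $J_2$, the $\sqrt{\lambda_n}$-part of $K_2$, and the $\lambda_n$-part of the $f$-piece of $\|\partial_x^2 u\|^2_{L^2}$: since here $f$ is only $L^2$ in $x$, that factor cannot be absorbed into a spatial norm of $f$ as it was in Theorem \ref{non-hom}. Instead I would integrate by parts in $s$ inside the Duhamel integral, writing $A(t)=\int_0^t a(\tau)\,d\tau$ and using $a(s)e^{-i\lambda_n A(s)}=\frac{i}{\lambda_n}\partial_s e^{-i\lambda_n A(s)}$ together with $a(s)\ge a_0>0$, to get
$$\lambda_n\int_0^t e^{-i\lambda_n A(s)}f_n(s)\,ds=i\,e^{-i\lambda_n A(s)}\frac{f_n(s)}{a(s)}\Big|_{s=0}^{s=t}-i\int_0^t e^{-i\lambda_n A(s)}\,\partial_s\!\left(\frac{f_n(s)}{a(s)}\right)ds,$$
so that this quantity is controlled by $a_0^{-1}\big(|f_n(0)|+|f_n(t)|+\int_0^t|\partial_s f_n(s)|\,ds\big)$ up to a term of the same shape coming from $\partial_s(1/a)$. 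Squaring, summing in $n$ and using Parseval ($\sum_n|f_n(s)|^2=\|f(s,\cdot)\|^2_{L^2}$, $\sum_n|\partial_s f_n(s)|^2=\|\partial_s f(s,\cdot)\|^2_{L^2}$), followed by Hölder in $s\in[0,T]$, turns these $\lambda_n$-weighted Duhamel contributions into $\frac{T^2}{a_0^2}\|f\|^2_{C^1([0,T],L^2)}$, while the Duhamel contributions with no power of $\lambda_n$ (such as the $\nu(x)\phi_n(x)$-part of $\phi_n'$ in $K_2$, or the $q$-part of the $\partial_x^2$ piece) are handled by the cruder bound $|\int_0^t e^{-i\lambda_n A(s)}f_n(s)\,ds|\le\int_0^t|f_n(s)|\,ds$ already used in Theorem \ref{non-hom}, giving $T^2\|f\|^2_{C([0,T],L^2)}$. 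Combining these with the $u_0$-bounds above, expanding the $(1+\|\nu\|^2_{L^2})$ and $\|\nu\|^2_{L^\infty}$ factors exactly as in Corollary \ref{cor1}, and taking square roots yields \eqref{ec-nh2}, \eqref{ec-nh3}, \eqref{ec-nh4}; existence, uniqueness and $u\in C([0,T],L^2(0,1))$ are inherited from Theorem \ref{non-hom}.

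I expect the Duhamel-term estimate to be the main obstacle: in Theorem \ref{non-hom} the oscillatory factor $e^{-i\lambda_n A(s)}$ was simply discarded, which was affordable only because $f$ was regular in $x$ there; here one must keep and exploit that oscillation via the integration by parts above, which is what simultaneously produces the gain of $\lambda_n^{-1}$, the appearance of $a_0$, and the replacement of spatial regularity of $f$ by one time derivative of $f$. The one genuinely technical point is the justification of that integration by parts, since $a$ is only assumed bounded above and below, so that $\partial_s(f_n(s)/a(s))$ has to be interpreted with some care; once this is in place, the remaining manipulations are the routine bookkeeping already carried out in the proofs of Corollary \ref{cor1} and Theorem \ref{non-hom}.
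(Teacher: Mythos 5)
Your proposal is correct and follows essentially the same route as the paper: the $u_0$-pieces are reduced to the bound \eqref{LA} from Corollary \ref{cor1}, and the $\lambda_n$-weighted Duhamel contributions are handled exactly as in the paper's treatment of $J_2$ (split $J_1+J_2+J_3$, integration by parts in $s$ using $a(s)e^{-i\lambda_n\int_0^s a}=\frac{i}{\lambda_n}\partial_s e^{-i\lambda_n\int_0^s a}$, the lower bound $a\geq a_0$, and the $C^1$-in-time norm of $f$), with the unweighted contributions estimated crudely as in Theorem \ref{non-hom}. You in fact spell out the adaptation for \eqref{ec-nh3} and \eqref{ec-nh4}, which the paper only indicates with ``similarly,'' and you flag the same technical point about $\partial_s(f_n/a)$ that the paper also treats only formally.
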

\begin{proof}
    The inequality \eqref{ec-nh1} follows from Theorem \ref{non-hom}. For $\|\partial_tu(t,\cdot)\|_{L^2}$, using \eqref{part_t} we have
\begin{eqnarray*}
\|\partial_tu(t,\cdot)\|_{L^2}&\lesssim& \int\limits_0^1\left|\sum\limits_{n=0}^\infty i \lambda_n a(t) D_n e^{i \lambda_n \int\limits_0^t a(\tau)d\tau}\phi_n(x)\right|^2dx \nonumber \\
&+&\int\limits_0^1\left|\sum\limits_{n=0}^\infty i \lambda_na(t) e^{i \lambda_n \int\limits_0^t a(\tau)d\tau} \int\limits_0^t e^{-i\lambda_n\int\limits_0^sa(\tau)d\tau}f_n(s)ds\phi_n(x)\right|^2 dx\nonumber \\
&+&\int\limits_0^1\left|\sum\limits_{n=0}^\infty f_n(t)\phi_n(x)\right|^2 dx = J_1 + J_2 +J_3.
\end{eqnarray*}
According to \eqref{t26}, \eqref{21-1} and \eqref{LA} we get
\begin{eqnarray*}
J_1&:=&\int\limits_0^1\left|\sum\limits_{n=0}^\infty i \lambda_n a(t) D_n e^{i \lambda_n \int\limits_0^t a(\tau)d\tau}\phi_n(x)\right|^2dx\\
&\lesssim& \|a\|^2_{L^\infty[0,T]}\left(\|q\|^2_{L^\infty}\|u_0\|^2_{L^2}+\|u_0''\|^2_{L^2}\right).
\end{eqnarray*}
For $J_3$ taking into account \eqref{func} and Parseval's identity we obtain
\begin{eqnarray*}    J_3&:=&\int\limits_0^1\left|\sum\limits_{n=0}^\infty f_n(t)\phi_n(x)\right|^2 dx = \left|\sum\limits_{n=0}^\infty f_n(t)\right|^2\lesssim \sum\limits_{n=0}^\infty \left|f_n(t)\right|^2=\|f(t,\cdot)\|^2_{L^2}\\
&\leq& \|f\|_{C([0,T],L^2(0,1))}.
\end{eqnarray*}
To estimate $J_2$, integrating by parts we obtain 
\begin{eqnarray*}
J_2&=&\int\limits_0^1\left|\sum\limits_{n=0}^\infty i\lambda_n a(t)e^{i\lambda_n\int\limits_0^ta(\tau)d\tau} \int\limits_0^t e^{-i\lambda_n\int\limits_0^sa(\tau)d\tau}f_n(s)ds\phi_n(x)\right|^2 dx\nonumber \\
&=&\int\limits_0^1\left|\sum\limits_{n=0}^\infty \left(i\lambda_n a(t)e^{i\lambda_n\int\limits_0^ta(\tau)d\tau}\frac{i}{\lambda_n}e^{-i\lambda_n \int\limits_0^sa(\tau)d\tau}\frac{f_n(s)}{a(s)}\biggl|_0^t\right.\right. \nonumber\\
&+& \left.\left. a(t)e^{i\lambda_n\int\limits_0^ta(\tau)d\tau}\int\limits_0^t e^{-i\lambda_n\int\limits_0^sa(\tau)d\tau}\left(\frac{f_n(s)}{a(s)}\right)'ds\right)\phi_n(x)\right|^2 dx\\
&\lesssim& \sum\limits_{n=0}^\infty \left|a(t)\frac{f_n(s)}{a(s)}\biggl|_0^t\right|^2 + \sum\limits_{n=1}^\infty\left|a(t)\int\limits_0^t \left(\frac{f'_n(s)}{a(s)}-\frac{f_n(s)a'(s)}{a^2(s)}\right)ds\right|^2=J_{21}+J_{22}.
\end{eqnarray*}
Since $a(x)\geq a_0>0$ in $t\in[0,T]$, we can use the estimate $\left|\frac{1}{a(t)}\right|^2\leq \frac{1}{a_0^2}$ for all $t\in[0,T]$, and using Parseval's identity, we obtain
\begin{eqnarray*}
J_{21}&:=& \sum\limits_{n=0}^\infty \left|a(t) \frac{f_n(s)}{a(s)}\biggl|_0^t\right|^2 \lesssim \|a\|^2_{L^\infty[0,T]}\left(\sum\limits_{n=1}^\infty \left|\frac{f_n(t)}{a(t)}\right|^2+\sum\limits_{n=1}^\infty \left|\frac{f_n(0)}{a(0)}\right|^2\right)\\
&\leq& \frac{1}{ a_0^2}\|a\|^2_{L^\infty[0,T]}\left(\sum\limits_{n=1}^\infty \left|f_n(t)\right|^2+\sum\limits_{n=1}^\infty \left|f_n(0)\right|^2\right)\\
&=&\frac{1}{a_0^2} \|a\|^2_{L^\infty[0,T]}\left(\|f(t,\cdot)\|^2_{L^2}+\|f(0,\cdot)\|^2_{L^2}\right).
\end{eqnarray*}
Carrying out similar reasoning, integrating by parts and using \eqref{I2}, we get
\begin{eqnarray*} J_{22}&:=&\sum\limits_{n=1}^\infty\left|a(t)\int\limits_0^t \left(\frac{f'_n(s)}{a(s)}-\frac{f_n(s)a'(s)}{a^2(s)}\right)ds\right|^2\\
&=&\sum\limits_{n=0}^\infty\left|a(t) \left(\int\limits_0^t\frac{f'_n(s)}{a(s)}ds+\int\limits_0^tf_n(s)d\left(\frac{1}{a(s)}\right)\right)\right|^2\\
&\lesssim&\frac{1}{a_0^2}\|a\|^2_{L^\infty[0,T]}\left|\sum\limits_{n=1}^\infty\int\limits_0^t|f'_n(s)|ds\right|^2+\|a\|^2_{L^\infty[0,T]}\left|\sum\limits_{n=1}^\infty \left(\frac{f_n(s)}{a(s)}\biggl|_0^t-\int\limits_0^t\frac{f'(s)}{a(s)}ds\right)\right|^2\\
&\lesssim&\frac{T^2}{a_0^2}\|a\|^2_{L^\infty[0,T]}\|f'\|^2_{C([0,T],L^2(0,1))}+\frac{1}{a_0^2}\|a\|^2_{L^\infty[0,T]}\left(\|f(t,\cdot)\|^2_{L^2}+\|f(0,\cdot)\|^2_{L^2}\right)\\
&+&\frac{T^2}{a_0^2}\|a\|^2_{L^\infty[0,T]}\|f'\|^2_{C([0,T],L^2(0,1))}.
\end{eqnarray*}
And finally, for $J_2$ we have
\begin{eqnarray}\label{f'}
J_2&:=&\int\limits_0^1\left|\sum\limits_{n=0}^\infty \lambda_n e^{-\lambda_n\int\limits_0^ta(\tau)d\tau} \int\limits_0^t e^{\lambda_n\int\limits_0^sa(\tau)d\tau}f_n(s)ds\phi_n(x)\right|^2 dx\nonumber\\
&\lesssim& \frac{2}{a_0^2}\|a\|^2_{L^\infty[0,T]} \left(\|f(t,\cdot)\|^2_{L^2}+\|f(0,\cdot)\|^2_{L^2}\right) +2 \frac{T^2}{a_0^2}\|a\|^2_{L^\infty[0,T]}\|f'\|^2_{C([0,T],L^2(0,1)}\nonumber\\
&\leq& \frac{2T^2}{a_0^2}\|a\|^2_{L^\infty[0,T]} \|f\|^2_{C^1([0,T],L^2(0,1))}.
\end{eqnarray}
Therefore,
\begin{eqnarray*}
\|\partial_tu(t,\cdot)\|^2_{L^2}&\lesssim& \|a\|^2_{L^\infty[0,T]}\left(\|u_0''\|^2_{L^2}+\|q\|^2_{L^\infty}\|u_0\|^2_{L^2}+\frac{2T^2}{a_0^2} \|f\|^2_{C^1([0,T],L^2(0,1))}\right)\\
&+&\|f\|^2_{C([0,T],L^2(0,1))},    
\end{eqnarray*}
implying \eqref{ec-nh2}.
Taking into account Corollary \ref{cor1} and similarly to previous estimates, we obtain the inequalities \eqref{ec-nh3} and \eqref{ec-nh4}. 

The proof of Corollary \ref{cor2} is complete.
\end{proof}

\section{Very weak solutions}

In this section we consider the differential case $s=1$. We will analyse the solutions for less regular coefficients $q$, $a$ and the initial condition $u_0$. For this we will be using the notion of very weak solutions. 

Assume that the coefficient $q$ and initial condition $u_0$ are the distributions on $(0,1)$, the coefficient $a$ is distribution on $[0,T]$. To regularize distributions, we introduce the following definition.

\begin{defn}\label{D1}  (i)
A net of functions $\left(u_\varepsilon=u_\varepsilon(t,x)\right)$ is said to be uniformly $L^2$-moderate if there exist $N\in \mathbb{N}_0$ and $C>0$ such that 
$$\|u_\varepsilon(t,\cdot)\|_{L^2}\leq C \varepsilon^{-N}, \quad \text{for all } t\in[0,T].$$
(ii) A net of functions ($u_{0,\varepsilon}=u_{0,\varepsilon}(x)$) is said to be $H^2$-moderate if there exist $N\in \mathbb{N}_0$ and $C>0$ such that
$$\|u_{0,\varepsilon}\|_{L^2}\leq C\varepsilon^{-N},\qquad \|u''_{0,\varepsilon}\|_{L^2}\leq C\varepsilon^{-N}.$$
\end{defn}

\begin{defn}\label{D11} (i) A net of functions $\left(q_\varepsilon=q_\varepsilon(x)\right)$ is said to be $L^\infty$-moderate if there exist $N\in \mathbb{N}_0$ and $C>0$ such that 
$$\|q_\varepsilon\|_{L^\infty(0,1)}\leq C \varepsilon^{-N}.$$
(ii) A net of functions $(a_\varepsilon=a_\varepsilon
(t))$ is said to be $L^\infty$-moderate if there exist $N\in \mathbb{N}_0$ and $C>0$ such that
$$\|a_\varepsilon\|_{L^\infty[0,T]}\leq C \varepsilon^{-N}.$$

\end{defn}

\begin{rem} We note that such assumptions are natural for distributional coefficients in the sense that regularisations of distributions are moderate. Precisely, by the structure theorems for distributions (see, e.g. \cite{Friedlander}), we know that distributions 
\begin{equation}\label{moder}
  \mathcal{D}'(0,1) \subset \{L^\infty(0,1) -\text{moderate families} \},  
\end{equation}
and we see from \eqref{moder}, that a solution to an initial/boundary problem may not exist in the sense of distributions, while it may exist in the set of $L^\infty$-moderate functions. 
\end{rem}

To give an example, let us take $f\in L^2(0,1)$, $f:(0,1)\to \mathbb{C}$. We introduce the function 
$$\Tilde{f}=\left\{\begin{array}{l}
    f, \text{ on }(0,1),  \\
    0,  \text{ on }\mathbb{R} \setminus (0,1),
\end{array}\right.$$
then $\Tilde{f}:\mathbb{R}\to \mathbb{C}$, and $\Tilde{f}\in \mathcal{E}'(\mathbb{R}).$

Let $\Tilde{f}_\varepsilon=\Tilde{f}*\psi_\varepsilon$ be obtained as the convolution of $\Tilde{f}$ with a Friedrich mollifier $\psi_\varepsilon$, where 
$$\psi_\varepsilon(x)=\frac{1}{\varepsilon}\psi\left(\frac{x}{\varepsilon}\right),\quad \text{for}\,\, \psi\in C^\infty_0(\mathbb{R}),\, \int \psi=1. $$
Then the regularising net $(\Tilde{f}_\varepsilon)$ is $L^p$-moderate for any $p \in [1,\infty)$, and it approximates $f$ on $(0,1)$:
$$0\leftarrow \|\Tilde{f}_\varepsilon-\Tilde{f}\|^p_{L^p(\mathbb{R})}\approx \|\Tilde{f}_\varepsilon-f\|^p_{L^p(0,1)}+\|\Tilde{f}_\varepsilon\|^p_{L^p(\mathbb{R}\setminus (0,1))}.$$
Now, let us introduce the notion of a very weak solution to the initial/boundary problem \eqref{C.p1}-\eqref{C.p3}.

\begin{defn}\label{D2}
Let $q\in \mathcal{D}'(0,1)$, $a\in \mathcal{D}'[0,T]$.  The net $(u_\varepsilon)_{\varepsilon>0}$ is said to be a very weak solution to the initial/boundary problem  \eqref{C.p1}-\eqref{C.p3} if there exists an $L^\infty$-moderate regularisation $q_\varepsilon$ of $q$, $L^\infty$-moderate regularisation $a_\varepsilon$ of $a$, and an $H^2$-moderate regularisation $u_{0,\varepsilon}$ of $u_0$, such that
\begin{equation}\label{vw1}
         \left\{\begin{array}{l}i\partial_t u_\varepsilon(t,x)+a_\varepsilon(t)\left(-\partial^2_x u_\varepsilon(t,x)+q_\varepsilon(x) u_\varepsilon(t,x)\right)=0,\quad (t,x)\in [0,T]\times(0,1),\\
 u_\varepsilon(0,x)=u_{0,\varepsilon}(x),\,\,\, x\in (0,1), \\
u_\varepsilon(t,0)=0=u_\varepsilon(t,1), \quad t\in[0,T],
\end{array}\right.\end{equation}
and $(u_\varepsilon)$ and $(\partial_t u_\varepsilon)$ are uniformly $L^{2}$-moderate.
\end{defn}

Then we have the following properties of very weak solutions.

\begin{thm}[Existence]\label{Ext}
Let the coefficients $q$ and initial condition $u_0$ be distributions in $(0,1)$, $q\geq0$, the coefficient $a$ be distribution in $[0,T]$ and there exists $a_0>0$, such that  $a\geq a_0>0$ in the sense that $\langle a-a_0,\phi\rangle \geq0$ for any $\phi\geq0$. Then the initial/boundary problem  \eqref{C.p1}-\eqref{C.p3} has a very weak solution.
\end{thm}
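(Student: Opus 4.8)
The plan is to verify directly that the construction already described in Definition~\ref{D2} produces a very weak solution, by combining the structure theorems for distributions with the quantitative estimates of Corollary~\ref{cor1}. First I would fix any $L^\infty$-moderate regularisations $q_\varepsilon$ of $q$ and $a_\varepsilon$ of $a$, and an $H^2$-moderate regularisation $u_{0,\varepsilon}$ of $u_0$; the existence of such nets is guaranteed by the remark following Definition~\ref{D11} (regularisation by a Friedrich mollifier), applied to the compactly supported extensions of $q$, $a$ and $u_0$ as in the example preceding Definition~\ref{D2}. One should also arrange $q_\varepsilon\geq 0$ and $a_\varepsilon\geq a_0>0$; since $q\geq 0$ and $a\geq a_0$ in the stated distributional sense and the mollifier $\psi$ is nonnegative (or can be chosen so), the mollified nets inherit these sign conditions, so the hypotheses of Corollary~\ref{cor1} are met for each fixed $\varepsilon>0$.

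Next, for each $\varepsilon>0$ I would apply Corollary~\ref{cor1} (with $s=1$) to the regularised problem~\eqref{vw1}: since $q_\varepsilon,\nu_\varepsilon\in L^\infty(0,1)$, $q_\varepsilon\geq 0$, $a_\varepsilon\geq a_0>0$, $a_\varepsilon\in L^\infty[0,T]$, and $u_{0,\varepsilon}\in L^2$ with $u_{0,\varepsilon}''\in L^2$, the problem has a unique solution $u_\varepsilon\in C([0,T],L^2(0,1))$ satisfying the estimates \eqref{ec1}--\eqref{ec4}. It then remains to check moderateness. From \eqref{ec1}, $\|u_\varepsilon(t,\cdot)\|_{L^2}\lesssim \|u_{0,\varepsilon}\|_{L^2}\leq C\varepsilon^{-N}$ uniformly in $t\in[0,T]$, so $(u_\varepsilon)$ is uniformly $L^2$-moderate. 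For $(\partial_t u_\varepsilon)$ one uses the equation itself: $\partial_t u_\varepsilon = i a_\varepsilon(t)(-\partial_x^2 u_\varepsilon + q_\varepsilon u_\varepsilon)$, hence
\begin{equation*}
\|\partial_t u_\varepsilon(t,\cdot)\|_{L^2}\leq \|a_\varepsilon\|_{L^\infty[0,T]}\left(\|\partial_x^2 u_\varepsilon(t,\cdot)\|_{L^2}+\|q_\varepsilon\|_{L^\infty}\|u_\varepsilon(t,\cdot)\|_{L^2}\right),
\end{equation*}
and then \eqref{ec4}, \eqref{ec1} together with the moderateness of $a_\varepsilon$, $q_\varepsilon$, $u_{0,\varepsilon}$ give a bound of the form $\|\partial_t u_\varepsilon(t,\cdot)\|_{L^2}\leq C'\varepsilon^{-N'}$ for suitable $N'\in\mathbb{N}_0$, uniformly in $t$. (Equivalently one can quote \eqref{ec-nh2} with $f\equiv 0$.) Thus both nets are uniformly $L^2$-moderate, so $(u_\varepsilon)$ satisfies all the requirements of Definition~\ref{D2}.

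The only genuinely delicate point is the compatibility of the positivity hypotheses with the regularisation: one must ensure that the \emph{same} mollifier produces $q_\varepsilon\geq 0$ and $a_\varepsilon\geq a_0$ while still being $L^\infty$-moderate, and that these regularisations are admissible in the sense of Definition~\ref{D2}. This is handled by choosing $\psi\in C_0^\infty(\mathbb{R})$ with $\psi\geq 0$ and $\int\psi=1$: then $q_\varepsilon=\tilde q*\psi_\varepsilon\geq 0$ as a convolution of nonnegative objects (interpreting $q\geq 0$ as a positive distribution/measure), and $a_\varepsilon - a_0 = (\widetilde{a-a_0})*\psi_\varepsilon \geq 0$ likewise, while $L^\infty$-moderateness follows from the structure theorem \eqref{moder}. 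Once this bookkeeping is in place, the proof is just an assembly of Corollary~\ref{cor1} with the moderate bounds on the data, so I would keep it short and refer back to those results rather than repeating the separation-of-variables computation.
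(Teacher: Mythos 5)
Your proposal is correct and follows essentially the same route as the paper: mollify the data to obtain moderate regularisations, solve the regularised problem for each fixed $\varepsilon$ via Corollary~\ref{cor1}, and read off the uniform $L^2$-moderateness of $(u_\varepsilon)$ and $(\partial_t u_\varepsilon)$ from the estimates \eqref{ec1}--\eqref{ec2}. Your extra care in checking that a nonnegative mollifier preserves $q_\varepsilon\geq 0$ and $a_\varepsilon\geq a_0$ is a small refinement the paper leaves implicit, but the argument is otherwise the same.
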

\begin{proof}
Since the formulation of \eqref{C.p1}-\eqref{C.p3} in this case might be impossible in the distributional sense due to issues related to the product of distributions, we replace \eqref{C.p1}-\eqref{C.p3} with a regularised equation. In other words, we regularize $q$ and $u_0$ by some corresponding sets $q_\varepsilon\geq 0$ and $u_{0,\varepsilon}$ of smooth functions from $ C^ \infty(0,1)$, $a$ by the set $a_\varepsilon$ of smooth functions from $C^\infty[0,T]$.

Hence, $q_\varepsilon$, $a_\varepsilon$ are $L^\infty$-moderate regularisations and $u_{0,\varepsilon}$ is an $H^2$-moderate regularisation of the coefficients $q$, $a$ and the Cauchy condition $u_0$ respectively. So by Definition \ref{D1} there exist $N\in \mathbb{N}_0$ and $C_1>0,\,C_2>0$, $C_3$, $C_4$ such that
$$\|q_\varepsilon\|_{L^\infty}\leq C_1\varepsilon^{-N},\quad \|u_{0,\varepsilon}\|_{L^2}\leq C_2\varepsilon^{-N},\quad  \|u''_{0,\varepsilon}\|_{L^2}\leq C_3 \varepsilon^{-N}, \quad \|a\|_{L^\infty}\leq C_4\varepsilon^{-N}.$$

Now we fix $\varepsilon\in (0,1]$, and consider the regularised problem \eqref{vw1}. Then all discussions and calculations of Theorem \ref{th1} are valid. Thus, by Corollary \ref{cor1}, the equation \eqref{vw1} has a unique solution $u_\varepsilon(t,x)$ in the space $C([0,T];L^2(0,1))$.

By Corollary \ref{cor1}, there exist $N\in \mathbb{N}_0$ and $C>0$, such that
$$\|u_\varepsilon(t,\cdot)\|_{L^2}\lesssim \|u_{0,\varepsilon}\|_{L^2}\leq C\varepsilon^{-N},$$
$$\|\partial_t u_\varepsilon(t,\cdot)\|_{L^2}\lesssim \|a\|^2_{L^\infty[0,T]}\left(\|u''_{0,\varepsilon}\|_{L^2}+\|q_\varepsilon\|_{L^\infty}\|u_{0, \varepsilon}\|_{L^2}\right)\leq C\varepsilon^{-N},$$
where the constants in these inequalities are independent of $u_0$, $q$ and $a$.
Therefore, $(u_\varepsilon)$ is uniformly $L^2$-moderate, and the proof of Theorem \ref{Ext} is complete.
\end{proof}

Describing the uniqueness of the very weak solutions amounts to “measuring” the changes of involved associated nets: negligibility conditions for nets of functions/distributions read as follows:

\begin{defn}[Negligibility]\label{D3}
Let $(u_\varepsilon)$, $(\Tilde{u}_\varepsilon)$ be two nets in $L^2(0,1)$. Then, the net $(u_\varepsilon-\Tilde{u}_\varepsilon)$ is called $L^2$-negligible, if for every $N\in \mathbb{N}$ there exist $C>0$ such that the following condition is satisfied
$$\|u_\varepsilon-\Tilde{u}_\varepsilon\|_{L^2}\leq C \varepsilon^N,$$
for all $\varepsilon\in (0,1]$. In the case where $u_\varepsilon=u_\varepsilon(t,x)$ is a net depending on $t\in [0,T]$, then the uniformly $L^2$-negligibility condition can be described as 
$$\|u_\varepsilon(t,\cdot)-\Tilde{u}_\varepsilon(t,\cdot)\|_{L^2}\leq C \varepsilon^N,$$
uniformly in $t\in [0,T]$. The constant $C$ can depends on $N$ but not on $\varepsilon$.
\end{defn}

Let us state the  \textquotedblleft$\varepsilon$-parameterised problems\textquotedblright \,to be considered:
\begin{equation}\label{un1}
    \left\{\begin{array}{l}i\partial_t u_\varepsilon(t,x)+a_\varepsilon(t)\left(-\partial^2_x u_\varepsilon(t,x)+q_\varepsilon(x) u_\varepsilon(t,x)\right)=0,\quad (t,x)\in [0,T]\times(0,1),\\
 u_\varepsilon(0,x)=u_{0,\varepsilon}(x),\,\,\, x\in (0,1), \\
u_\varepsilon(t,0)=0=u_\varepsilon(t,1),\quad t\in[0,T],
\end{array}\right.
\end{equation}
and
\begin{equation}\label{un2}
    \left\{\begin{array}{l}i\partial_t \Tilde{u}_\varepsilon(t,x)+\Tilde{a}_{\varepsilon}(t)\left(-\partial^2_x \Tilde{u}_\varepsilon(t,x)+\Tilde{q}_\varepsilon(x) \Tilde{u}_\varepsilon(t,x)\right)=0,\quad (t,x)\in [0,T]\times(0,1),\\
 \Tilde{u}_\varepsilon(0,x)=\Tilde{u}_{0,\varepsilon}(x),\,\,\, x\in (0,1), \\
\Tilde{u}_\varepsilon(t,0)=0=\Tilde{u}_\varepsilon(t,1), \quad t\in[0,T].
\end{array}\right.
\end{equation}

\begin{defn}[Uniqueness of the very weak solution]\label{D4}
Let $q\in \mathcal{D}'(0,1)$, $a\in \mathcal{D}'[0,T]$. We say that initial/boundary problem \eqref{C.p1}-\eqref{C.p3} has an unique very weak solution, if
for all $L^\infty$-moderate nets $q_\varepsilon$, $\Tilde{q}_\varepsilon$, such that $(q_\varepsilon-\Tilde{q}_\varepsilon)$ is $L^\infty$-negligible; for all $L^\infty$-moderate nets $a_\varepsilon$, $\Tilde{a}_\varepsilon$, such that $(a_\varepsilon-\Tilde{a}_\varepsilon)$ is $L^\infty$-negligible; and for all $H^2$-moderate regularisations $u_{0,\varepsilon},\,\Tilde{u}_{0,\varepsilon}$, such that $(u_{0,\varepsilon}-\Tilde{u}_{0,\varepsilon})$, is $H^2$-negligible, we have that $u_\varepsilon-\Tilde{u}_\varepsilon$ is uniformly $L^2$-negligible.
\end{defn}

\begin{thm}[Uniqueness of the very weak solution]\label{Th-U}
Let the coefficient $q$ and initial condition $u_0$ be distributions in $(0,1)$, $q\geq0$, the coefficient $a$ be a distribution in $[0,T]$ and there exists $a_0>0$, such that $a\geq a_0>0$ in the sense that $\langle a-a_0,\phi\rangle\geq0$ for any $\phi\geq0$. Then the very weak solution to the initial/boundary problem  \eqref{C.p1}-\eqref{C.p3} is unique.
\end{thm}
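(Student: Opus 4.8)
The plan is the standard comparison argument underlying uniqueness of very weak solutions. Given the two $\varepsilon$-parametrised problems \eqref{un1} and \eqref{un2}, with $(q_\varepsilon-\Tilde{q}_\varepsilon)$ and $(a_\varepsilon-\Tilde{a}_\varepsilon)$ $L^\infty$-negligible and $(u_{0,\varepsilon}-\Tilde{u}_{0,\varepsilon})$ $H^2$-negligible, I would set $W_\varepsilon:=u_\varepsilon-\Tilde{u}_\varepsilon$, subtract \eqref{un2} from \eqref{un1}, and regroup (adding and subtracting $a_\varepsilon(-\partial_x^2\Tilde{u}_\varepsilon+q_\varepsilon\Tilde{u}_\varepsilon)$ and $a_\varepsilon\Tilde{q}_\varepsilon\Tilde{u}_\varepsilon$) so that the operator $-\partial_x^2+q_\varepsilon$ with coefficient $a_\varepsilon$ acts on $W_\varepsilon$. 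Then $W_\varepsilon$ solves a non-homogeneous Schrödinger problem of the type \eqref{nonh} with coefficients $a_\varepsilon,q_\varepsilon$, Dirichlet boundary conditions, initial datum $W_\varepsilon(0,\cdot)=u_{0,\varepsilon}-\Tilde{u}_{0,\varepsilon}$, and source term
\begin{equation*}
f_\varepsilon=\bigl(\Tilde{a}_\varepsilon-a_\varepsilon\bigr)\bigl(-\partial_x^2\Tilde{u}_\varepsilon+\Tilde{q}_\varepsilon\Tilde{u}_\varepsilon\bigr)+a_\varepsilon\bigl(\Tilde{q}_\varepsilon-q_\varepsilon\bigr)\Tilde{u}_\varepsilon.
\end{equation*}

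Next I would show that $W_\varepsilon(0,\cdot)$ is $L^2$-negligible and that $f_\varepsilon$ is uniformly $L^2$-negligible. The first is immediate, since $H^2$-negligibility of $(u_{0,\varepsilon}-\Tilde{u}_{0,\varepsilon})$ implies $L^2$-negligibility. For the source, Hölder's inequality gives
\begin{align*}
\|f_\varepsilon(t,\cdot)\|_{L^2}&\leq\|\Tilde{a}_\varepsilon-a_\varepsilon\|_{L^\infty[0,T]}\bigl(\|\partial_x^2\Tilde{u}_\varepsilon(t,\cdot)\|_{L^2}+\|\Tilde{q}_\varepsilon\|_{L^\infty}\|\Tilde{u}_\varepsilon(t,\cdot)\|_{L^2}\bigr)\\
&\quad+\|a_\varepsilon\|_{L^\infty[0,T]}\,\|\Tilde{q}_\varepsilon-q_\varepsilon\|_{L^\infty}\,\|\Tilde{u}_\varepsilon(t,\cdot)\|_{L^2}.
\end{align*}
Here $\|\Tilde{u}_\varepsilon(t,\cdot)\|_{L^2}$ and $\|\partial_x^2\Tilde{u}_\varepsilon(t,\cdot)\|_{L^2}$ are moderate, uniformly in $t\in[0,T]$, by Corollary \ref{cor1} applied to \eqref{un2} (estimates \eqref{ec1} and \eqref{ec4}), using the $H^2$-moderateness of $\Tilde{u}_{0,\varepsilon}$ and the $L^\infty$-moderateness of $\Tilde{q}_\varepsilon$; and $\|a_\varepsilon\|_{L^\infty[0,T]}$, $\|\Tilde{q}_\varepsilon\|_{L^\infty}$ are moderate by assumption. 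Since the product of finitely many moderate nets with an $L^\infty$-negligible net is again negligible, we conclude $\sup_{t\in[0,T]}\|f_\varepsilon(t,\cdot)\|_{L^2}\leq C_N\varepsilon^N$ for every $N\in\mathbb{N}$.

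Finally I would close the argument with the basic $L^2$ a priori bound for the $W_\varepsilon$-problem. Pairing the equation with $W_\varepsilon$ in $L^2(0,1)$ and taking imaginary parts --- using that $-\partial_x^2+q_\varepsilon$ is symmetric under the Dirichlet conditions and that $a_\varepsilon$ is real-valued, so the principal part contributes purely imaginarily --- one gets $\tfrac{d}{dt}\|W_\varepsilon(t,\cdot)\|_{L^2}^2\leq 2\|f_\varepsilon(t,\cdot)\|_{L^2}\|W_\varepsilon(t,\cdot)\|_{L^2}$, and hence
\begin{equation*}
\|W_\varepsilon(t,\cdot)\|_{L^2}\leq\|u_{0,\varepsilon}-\Tilde{u}_{0,\varepsilon}\|_{L^2}+T\sup_{s\in[0,T]}\|f_\varepsilon(s,\cdot)\|_{L^2},\qquad t\in[0,T]
\end{equation*}
(equivalently, one may simply invoke estimate \eqref{ec-nh1} of Corollary \ref{cor2} for this problem, whose implied constant is independent of the data). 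By the previous step both terms on the right-hand side are negligible, so $(u_\varepsilon-\Tilde{u}_\varepsilon)$ is uniformly $L^2$-negligible, which by Definition \ref{D4} is exactly the assertion. I expect the main obstacle to be essentially organisational: computing the precise form of $f_\varepsilon$ and keeping a clean distinction between the moderate nets and the negligible nets so that the ``moderate $\times$ negligible $=$ negligible'' mechanism applies termwise; the a priori estimate itself is routine, and in particular the lower bound $a_\varepsilon\geq a_0>0$ is not needed in this part of the argument, only the (moderate) boundedness of $a_\varepsilon$.
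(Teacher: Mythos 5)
Your proposal is correct and takes essentially the same route as the paper: you form the difference $U_\varepsilon=u_\varepsilon-\Tilde{u}_\varepsilon$, identify it as a solution of the non-homogeneous problem with an algebraically equivalent forcing term $f_\varepsilon$, bound $\|f_\varepsilon(t,\cdot)\|_{L^2}$ by negligible-times-moderate factors using the estimates of Corollary \ref{cor1} for $\Tilde{u}_\varepsilon$, and close with the $L^2$ bound \eqref{ec-nh1} of Corollary \ref{cor2}, exactly as the paper does. The only cosmetic difference is your optional direct energy-identity derivation of that $L^2$ bound, which the paper replaces by citing \eqref{ec-nh1}.
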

\begin{proof}
We denote by $u_\varepsilon$ and $\Tilde{u}_\varepsilon$ the families of solutions to the initial/boundary problems \eqref{un1} and \eqref{un2}, respectively. Setting $U_\varepsilon$ to be the difference of these nets $U_\varepsilon:=u_\varepsilon(t,\cdot)-\Tilde{u}_\varepsilon(t,\cdot)$, then $U_\varepsilon$ solves
    \begin{equation}\label{unq}
    \left\{\begin{array}{l}i\partial_t U_\varepsilon(t,x)+a_\varepsilon(t)\left(-\partial^2_x U_\varepsilon(t,x)+q_\varepsilon(x) U_\varepsilon(t,x)\right)=f_\varepsilon(t,x),\quad (t,x)\in [0,T]\times(0,1),\\
 U_\varepsilon(0,x)=(u_{0,\varepsilon}-\Tilde{u}_{0,\varepsilon})(x),\,\,\, x\in (0,1), \\
U_\varepsilon(t,0)=0=U_\varepsilon(t,1),
\end{array}\right.
\end{equation}
where we set 
\begin{eqnarray*}
    f_\varepsilon(t,x)&:=&(a_\varepsilon(t)-\Tilde{a}_\varepsilon(t))\partial^2_x\Tilde{u}_\varepsilon(t,x)\\
    &+&\left\{\Tilde{a}_\varepsilon(t)\left(\Tilde{q}_\varepsilon(x)-q_\varepsilon(x)\right)+q_\varepsilon(x)\left(\Tilde{a}_\varepsilon(x)-a_\varepsilon(x)\right)\right\}\Tilde{u}_\varepsilon(t,x)
\end{eqnarray*} 
for the forcing term to the non-homogeneous initial/boundary problem \eqref{unq}.

Passing to the $L^2$-norm of the $U_\varepsilon$, by using \eqref{ec-nh1} we obtain 
$$
\|U_\varepsilon(t,\cdot)\|^2_{L^2}\lesssim \|U_\varepsilon(0,\cdot)\|^2_{L^2}+T^2\|f_\varepsilon\|^2_{C([0,T],L^2(0,1))}.
$$
For the $\|f_\varepsilon\|^2_{C([0,T],L^2(0,1))}$ by using \eqref{25}, \eqref{ec4} we get
\begin{eqnarray*}
\|f_\varepsilon\|^2_{C([0,T],L^2(0,1))}&\lesssim& \|\Tilde{a}_\varepsilon-a_\varepsilon\|^2_{L^\infty[0,T]}\left(\|\Tilde{u}''_{0,\varepsilon}\|^2_{L^2}+2\|\Tilde{q}_\varepsilon\|^2_{L^\infty}\|\Tilde{u}_{0,\varepsilon}\|^2_{L^2}\right)\\
&+&\|\Tilde{q}_\varepsilon-q_\varepsilon\|^2_{L^\infty}\|\Tilde{a}_\varepsilon\|^2_{L^\infty[0,T]}\|\Tilde{u}_\varepsilon\|^2_{C([0,T],L^2(0,1))}\\
&+&\|\Tilde{a}_\varepsilon-a_\varepsilon\|^2_{L^\infty[0,T]}\|q_\varepsilon\|^2_{L^\infty}\|\Tilde{u}_\varepsilon\|^2_{C([0,T],L^2(0,1))}.
\end{eqnarray*}
Next, using the initial condition of \eqref{unq}, we obtain
\begin{eqnarray*}
\|U_\varepsilon(t,\cdot)\|^2_{L^2}&\lesssim& \|u_{0,\varepsilon}-\Tilde{u}_{0,\varepsilon}\|^2_{L^2}+T^2\|\Tilde{a}_\varepsilon-a_\varepsilon\|^2_{L^\infty[0,T]}\left(\|\Tilde{u}''_{0,\varepsilon}\|^2_{L^2}+2\|\Tilde{q}_\varepsilon\|^2_{L^\infty}\|\Tilde{u}_{0,\varepsilon}\|^2_{L^2}\right)\\
&+&T^2\|\Tilde{q}_\varepsilon-q_\varepsilon\|^2_{L^\infty}\|\Tilde{a}_\varepsilon\|^2_{L^\infty[0,T]}\|\Tilde{u}_\varepsilon\|^2_{C([0,T],L^2(0,1))}\\
&+&T^2\|\Tilde{a}_\varepsilon-a_\varepsilon\|^2_{L^\infty[0,T]}\|q_\varepsilon\|^2_{L^\infty}\|\Tilde{u}_\varepsilon\|^2_{C([0,T],L^2(0,1))}.
\end{eqnarray*}
Taking into account the negligibility of the nets $u_{0,\varepsilon}-\Tilde{u}_{0,\varepsilon}$, $q_\varepsilon-\Tilde{q}_\varepsilon$ and $a_\varepsilon-\Tilde{a}_\varepsilon$ we get
$$\|U_\varepsilon(t,\cdot)\|^2_{L^2}\leq C_1\varepsilon^{N_1}+\varepsilon^{N_2}\left(C_2\varepsilon^{-N_3}+C_3\varepsilon^{-N_4}\right)+\varepsilon^{N_5}\left(C_4\varepsilon^{-N_6}+C_5\varepsilon^{-N_7}\right)$$
for some $C_1>0,\,C_2>0, \, C_3>0,\,C_4>0,\, C_5>0,\,\,N_3,\,N_4,\, N_6,\, N_7\in \mathbb{N}$ and all $N_1,\,N_2,\, N_5 \in \mathbb{N}$, since $\Tilde{u}_\varepsilon$ is moderate. Then, for some $C_M>0$ and all $M\in \mathbb{N}$
$$\|U_\varepsilon(t,\cdot)\|_{L^2}\leq C_M \varepsilon^M.$$
The last estimate holds true uniformly in $t$ , and this completes the proof of Theorem \ref{Th-U}.
\end{proof}

\begin{thm}[Consistency]\label{Th-C} Assume that $q\in L^\infty(0,1)$, $q\geq0$, $a(t)\geq a_0>0$ for all $t\in[0,T]$ and let $(q_\varepsilon)$ be any $L^\infty$-regularisation of $q$, $(a_\varepsilon)$ be any $L^\infty$-regularisation of $a$, that is $\|q_\varepsilon-q\|_{L^\infty}\to 0$, $\|a_\varepsilon-a\|_{L^\infty[0,T]}\to 0$ as $\varepsilon\to 0$. Let the initial condition satisfy $u_0 \in L^2(0,1)$. Let $u$ be a very weak solution of the initial/boundary problem \eqref{C.p1}-\eqref{C.p3}. Then for any families $q_\varepsilon$, $a_\varepsilon$ $u_{0,\varepsilon}$ such that  $\|q-q_{\varepsilon}\|_{L^\infty}\to 0$, $\|a-a_{\varepsilon}\|_{L^\infty[0,T]}\to 0$, $\|u_{0}-u_{0,\varepsilon}\|_{L^2}\to 0$ as $\varepsilon\to 0$, any representative $(u_\varepsilon)$ of the very weak solution converges as 
$$\sup\limits_{0\leq t\leq T}\|u(t,\cdot)-u_\varepsilon(t,\cdot)\|_{L^2}\to 0$$ 
for $\varepsilon\to 0$ to the unique classical solution $u\in C([0,T];L^2(0,1))$ of the initial/boundary problem \eqref{C.p1}-\eqref{C.p3} given by Theorem \ref{th1}.
\end{thm}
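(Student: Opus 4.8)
The plan is to control $\sup_{0\le t\le T}\|u(t,\cdot)-u_\varepsilon(t,\cdot)\|_{L^2}$ by inserting an intermediate net and invoking the a priori estimates of Theorem \ref{th1} and Corollary \ref{cor2}. Let $u$ be the unique solution of \eqref{C.p1}--\eqref{C.p3} (with $s=1$) with datum $u_0$ and exact coefficients $q,a$ furnished by Theorem \ref{th1}, and let $w_\varepsilon\in C([0,T],L^2(0,1))$ be the solution of the \emph{same} problem in which only the datum is replaced by $u_{0,\varepsilon}$, the coefficients $q,a$ being left untouched. Then $u-w_\varepsilon$ solves the homogeneous problem \eqref{C.p1}--\eqref{C.p3} with coefficients $q,a$ and datum $u_0-u_{0,\varepsilon}$, so \eqref{eq2.1} (equivalently, the conservation of the $L^2$-norm along the Schrödinger flow, valid because $\mathcal{L}$ is self-adjoint and $a$ is real-valued) yields
\[
\sup_{0\le t\le T}\|u(t,\cdot)-w_\varepsilon(t,\cdot)\|_{L^2}\ \lesssim\ \|u_0-u_{0,\varepsilon}\|_{L^2}\ \longrightarrow\ 0 .
\]
It therefore remains to estimate $\sup_t\|w_\varepsilon(t,\cdot)-u_\varepsilon(t,\cdot)\|_{L^2}$, that is, to measure the effect of perturbing $(q,a)$ to $(q_\varepsilon,a_\varepsilon)$ with the now common initial datum $u_{0,\varepsilon}$.

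Put $Z_\varepsilon:=w_\varepsilon-u_\varepsilon$; it vanishes at $t=0$, satisfies the Dirichlet conditions, and subtracting the two equations and regrouping shows that $Z_\varepsilon$ solves the non-homogeneous problem \eqref{nonh} with coefficients $a_\varepsilon,q_\varepsilon$ and forcing
\[
g_\varepsilon(t,x)=\bigl(a_\varepsilon(t)-a(t)\bigr)\bigl(-\partial_x^2+q(x)\bigr)w_\varepsilon(t,x)+a_\varepsilon(t)\bigl(q_\varepsilon(x)-q(x)\bigr)w_\varepsilon(t,x),
\]
in which one may, if convenient, substitute $(-\partial_x^2+q)w_\varepsilon=-\tfrac{i}{a}\partial_t w_\varepsilon$ from the equation for $w_\varepsilon$. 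Since $a_\varepsilon\ge a_0/2>0$ for $\varepsilon$ small, the $L^2$ a priori bound \eqref{ec-nh1} of Corollary \ref{cor2} (equivalently, the energy identity $\frac{d}{dt}\|Z_\varepsilon(t,\cdot)\|_{L^2}\le\|g_\varepsilon(t,\cdot)\|_{L^2}$), together with $Z_\varepsilon(0,\cdot)=0$, gives
\[
\sup_{0\le t\le T}\|Z_\varepsilon(t,\cdot)\|_{L^2}\ \lesssim\ T\,\|g_\varepsilon\|_{C([0,T],L^2(0,1))}.
\]
By \eqref{eq2.1} one has $\|w_\varepsilon(t,\cdot)\|_{L^2}\lesssim\|u_{0,\varepsilon}\|_{L^2}$, and by \eqref{eq2.2} (or \eqref{est5}) $\|(-\partial_x^2+q)w_\varepsilon(t,\cdot)\|_{L^2}\lesssim\|u_{0,\varepsilon}\|_{W^2_\mathcal{L}}$, uniformly in $t$ and $\varepsilon$; hence
\[
\|g_\varepsilon\|_{C([0,T],L^2(0,1))}\ \lesssim\ \|a-a_\varepsilon\|_{L^\infty[0,T]}\,\|u_{0,\varepsilon}\|_{W^2_\mathcal{L}}\ +\ \|q-q_\varepsilon\|_{L^\infty}\,\|u_{0,\varepsilon}\|_{L^2}.
\]
Combining the two steps, $\sup_t\|u(t,\cdot)-u_\varepsilon(t,\cdot)\|_{L^2}\lesssim\|u_0-u_{0,\varepsilon}\|_{L^2}+T\|g_\varepsilon\|_{C([0,T],L^2)}$, so everything reduces to showing that this bound tends to $0$.

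The terms $\|u_0-u_{0,\varepsilon}\|_{L^2}$ and $\|q-q_\varepsilon\|_{L^\infty}\|u_{0,\varepsilon}\|_{L^2}$ vanish immediately. The genuine obstacle is the term $\|a-a_\varepsilon\|_{L^\infty[0,T]}\,\|u_{0,\varepsilon}\|_{W^2_\mathcal{L}}$, in which the first factor converges only qualitatively while the second is a priori only $H^2$-moderate and may blow up when $u_0\notin W^2_\mathcal{L}$. When $u_0$ has the additional regularity used in Corollaries \ref{cor1}--\ref{cor2}, i.e.\ $u_0''\in L^2(0,1)$, and $u_{0,\varepsilon}$ is a standard (Friedrichs-type) regularisation, one has $\sup_\varepsilon\|u_{0,\varepsilon}\|_{W^2_\mathcal{L}}<\infty$, so the estimate closes at once and the convergence is uniform in $t$; this is the version I would present in full. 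For a merely $L^2$ datum I would instead argue by compactness: $(u_\varepsilon)$ is bounded in $L^\infty([0,T],L^2(0,1))$ by \eqref{eq2.1}, and $(\partial_t u_\varepsilon)$ is bounded in $L^\infty([0,T],H^{-2}(0,1))$ directly from the equation (since $(q_\varepsilon)$ is bounded in $L^\infty$ and $(u_\varepsilon)$ in $L^2$), hence $(u_\varepsilon)$ is relatively compact in $C([0,T],H^{-2}(0,1))$; passing to the limit in the weak formulation — using $q_\varepsilon\to q$ in $L^\infty$, $a_\varepsilon\to a$ in $L^\infty[0,T]$ and $u_{0,\varepsilon}\to u_0$ in $L^2$ — every limit point is a solution of \eqref{C.p1}--\eqref{C.p3} with the exact data, hence equals $u$ by the uniqueness in Theorem \ref{th1}; and since $\|u_\varepsilon(t,\cdot)\|_{L^2}=\|u_{0,\varepsilon}\|_{L^2}\to\|u_0\|_{L^2}=\|u(t,\cdot)\|_{L^2}$ uniformly in $t$, weak convergence combined with convergence of norms upgrades to strong $L^2$ convergence, which a routine subsequence argument along pairs $t_k\to t_*$, $\varepsilon_k\to0$ renders uniform in $t\in[0,T]$.
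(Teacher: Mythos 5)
Your argument is correct in substance and, at its core, runs on the same engine as the paper's proof: the difference of two solutions is regarded as a solution of the non-homogeneous problem \eqref{nonh} whose forcing consists of the coefficient differences multiplied by a reference solution, and the $L^2$ bound \eqref{ec-nh1} of Corollary \ref{cor2} is applied. The bookkeeping differs: the paper compares $u$ and $u_\varepsilon$ directly, so the difference $V_\varepsilon=u-u_\varepsilon$ carries the datum error $u_0-u_{0,\varepsilon}$ as initial condition and the forcing $f_\varepsilon=(a-a_\varepsilon)\partial_x^2u+\left\{a_\varepsilon(q_\varepsilon-q)+q(a_\varepsilon-a)\right\}u$ built from the exact solution $u$, whereas you insert the intermediate net $w_\varepsilon$ (exact coefficients, regularised datum), which isolates the datum error via $L^2$-conservation and puts $w_\varepsilon$ into the forcing; these are minor variants of one another. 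Both variants meet the same obstruction: the term multiplying $\|a-a_\varepsilon\|_{L^\infty[0,T]}$ involves second-order quantities ($\|u''\|_{L^2}$ in the paper, $\|u_{0,\varepsilon}\|_{W^2_\mathcal{L}}$ in your version), which need not be finite, respectively uniformly bounded, for a datum that is merely in $L^2(0,1)$. The paper's proof uses $\|u''\|_{L^2}$ without comment, i.e.\ it tacitly assumes this extra regularity, while you flag the issue explicitly and close the general $L^2$ case by a compactness argument (uniform $L^\infty([0,T],L^2)$ bound, $\partial_t u_\varepsilon$ bounded in $H^{-2}$ from the equation, Arzel\`a--Ascoli compactness in $C([0,T],H^{-2})$, identification of the limit, and upgrade from weak to strong $L^2$ convergence via exact conservation of the $L^2$ norm, made uniform in $t$ by a subsequence argument); this is a genuine gain in rigour over the paper's own treatment. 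The one step you should spell out is the identification of the weak limit: Theorem \ref{th1} provides uniqueness for the series solution in $C([0,T],L^2)$, while your limit a priori lies only in $L^\infty([0,T],L^2)\cap C([0,T],H^{-2})$; the cleanest fix is to pair the limit equation with the eigenfunctions $\phi_n$ and observe that the Fourier coefficients $v_n(t)=\langle v(t,\cdot),\phi_n\rangle$ satisfy $iv_n'(t)+\lambda_n a(t)v_n(t)=0$ with $v_n(0)=\langle u_0,\phi_n\rangle$, which forces the limit to coincide with the classical solution of Theorem \ref{th1}.
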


\begin{proof}
For $u$ and for $u_\varepsilon$, as in our assumption, we introduce an auxiliary notation $V_\varepsilon(t, x):= u(t,x)-u_\varepsilon(t,x)$. Then the net $V_\varepsilon$ is a solution to the initial/boundary problem
\begin{equation}\label{51}
    \left\{\begin{array}{l}
        i\partial_tV_\varepsilon(t,x)+a_\varepsilon(t)\left(-\partial^2_xV_\varepsilon(t,x)+q_\varepsilon(x)V_\varepsilon(t,x)\right)=f_\varepsilon(t,x),\\
        V_\varepsilon(0,x)=(u_0-u_{0,\varepsilon})(x),\quad x\in (0,1),\\        V_\varepsilon(t,0)=0=V_\varepsilon(t,1), \quad t\in[0,T],
    \end{array}\right.
\end{equation}
where 
$$f_\varepsilon(t,x):=(a(t)-a_\varepsilon(t))\partial^2_x u(t,x)+\left\{a_\varepsilon(t)(q_\varepsilon(x)-q(x))+q(x)(a_\varepsilon(t)-a(t))\right\}u(t,x).$$  Analogously to Theorem \ref{Th-U} we have that
\begin{eqnarray*}
    \|V_\varepsilon(t,\cdot)\|^2_{L^2}&\lesssim& \|u_{0}-u_{0,\varepsilon}\|^2_{L^2}+T^2\|a-a_\varepsilon\|^2_{L^\infty[0,T]}\left(\|u''\|^2_{L^2}+2\|q\|^2_{L^\infty}\|u\|^2_{L^2}\right)\\
&+&T^2\|a-a_\varepsilon\|^2_{L^\infty[0,T]}\|q\|^2_{L^\infty}\|u\|^2_{C([0,T],L^2(0,1))}\\
&+&T^2\|q-q_\varepsilon\|^2_{L^\infty}\|a_\varepsilon\|^2_{L^\infty[0,T]}\|u\|^2_{C([0,T],L^2(0,1))}.
\end{eqnarray*}
$$$$
Since
$$\|u_{0}-{u}_{0,\varepsilon}\|_{L^2}\to 0,\quad  \|q_\varepsilon-q\|_{L^\infty}\to 0, \quad \|a-a_\varepsilon\|_{L^\infty[0,T]}\to 0$$
for $\varepsilon\to 0$ and $u$ is a very weak solution of the initial/boundary problem \eqref{C.p1}-\eqref{C.p3} we get
$$\|V_\varepsilon(t,\cdot)\|_{L^2}\to 0$$
for $\varepsilon\to 0$. This proves Theorem \ref{Th-C}.
\end{proof}

\end{document}